\newcommand{\Z}{{\mathbf{Z}}}
\newcommand{\N}{{\mathbf{N}}}
\newcommand{\Q}{{\mathbf{Q}}}
\newcommand{\R}{{\mathbf{R}}}
\newcommand{\F}{{\mathbf{F}}}
\newcommand{\1}{{\textbf{1}}}
\newcommand{\Span}{\mathrm{span}}
\newcommand{\supp}{\mathrm{supp}}
\newcommand{\Id}{\mathrm{Id}}
\newcommand{\CB}{\mathcal{C}^b}
\newcommand{\oM}{{\overline{M}}}
\theoremstyle{plain}
\newtheorem{thm}{Theorem}
\newtheorem*{thm*}{Theorem}
\newtheorem{lem}[thm]{Lemma}
\newtheorem{prop}[thm]{Proposition}
\newtheorem{cor}[thm]{Corollary}
\theoremstyle{definition}
\newtheorem*{defn*}{Definition}
\newtheorem{defn}[thm]{Definition}
\newtheorem{example}[thm]{Example}
\newtheorem*{example*}{Example}
\newtheorem{rem}[thm]{Remark}
\newtheorem*{rem*}{Remark}
\begin{document}

\title{Majorizing Norms on Majorized Spaces}

\begin{abstract}
We introduce a new type of norm for ordered vector spaces majorized by a proper (convex) cone that generalizes the notions of order unit norm and base norm. Then we give sufficient conditions to ensure its completeness. In the case of normed Riesz spaces, we also present necessary completeness conditions and describe the completed norm. Finally, we define a particular family of well-behaving sets. Spaces majorized by the cone generated by these sets display surprisingly interesting structural properties. In particular, we generalize two famous standard results in Banach lattice theory. Namely, every principal ideal in a Banach lattice is an AM-space and every Banach lattice having a generating cone with a compact base admits an equivalent L-norm. 
\end{abstract}
\author{Vasco Schiavo}

\address{EPFL, Switzerland}

\date{Chur, November 2021}

\maketitle


\section*{Introduction}
This manuscript investigates ordered vector spaces majorized by a proper (convex) cone. The motivations for conducting a study in this direction are essentially two. 

\medskip
The first one comes from applications in group theory. Spaces of functions majorized by the cone generated by a group orbit have been deeply employed in the study of amenability and supramenability related questions. The first to use such spaces was Rosenblatt in his Ph.D. thesis (\cite{rosenblatt}) while studying supramenable actions. After him, majorized spaces were also used to investigate supramenable groups in \cite{kellerhals}, and a fixed-point property for group representations on cones in \cite{monod} and in \cite{vasco}. The reason for using majorized spaces in these situations is that they are the natural domain of unbounded functionals appearing when handling such groups.

\medskip
The second motivation is more theoretical. Recall that on every principal ideal in a Banach lattice is possible to define a norm, called \textit{order unit norm} (\cite[Lemma 1]{ellis}), that turns the principal ideal into an AM-space with order unit (\cite[Lemma 4 (i)]{ellis}). Being majorized by a cone is, in some sense, the same as having an order unit. One should expect to generalize the concept of ordered unit norm for majorized spaces as well since principal ideals are in particular majorized spaces. Something was already done for Banach lattices that admit a generating cone with a bounded base. Indeed, on such spaces is possible to define a norm using the Minkowski functional associated with the base of the generating cone (\cite[Lemma 3]{ellis}). The norm so constructed is called a \textit{base norm} and it turns the Banach lattice into an AL-space (\cite[Lemma 4 (ii)]{ellis}). The two norms are equal when considering cones with a one-element base or in the finite-dimensional case. However, they are generally different. Actually, they are mutually dual, i.e., a Banach lattice has an order unit norm (resp. a base norm) if and only if its dual has a base norm (resp. an order unit norm), see \cite[Theorem 5 \& Corollary p.737]{ellis}.

\medskip
Therefore, we introduced a majorizing norm on majorized space, looking toward the following goals. Generalize the notion of order unit norm and base norm simultaneously to develop a theory that meets both views, and set the theoretical background to employ majorized spaces in different domains, e.g., supramenable groups. 

\medskip
After a brief recapitulation about ordered vector spaces, Section \ref{Section majorized spaces} introduces majorizing semi-norms. Here, sufficient conditions to ensure that a majorizing semi-norm is a norm and about its completeness are presented (Propositions \ref{proposition p_M is a norm} and \ref{proposition AM-space and totally bounded set}). The following section handles the case of Banach lattices. In this case, it was possible to completely understand the completeness of majorizing norms (Theorems \ref{theorem (E,d)_infty is a Banach lattice} and \ref{theorem completion of (E,d) is (E,d)_infty}). In Section \ref{section positive functionals}, linear functionals on majorized spaces are studied.  In particular, linear functionals on majorized spaces were characterized using continuity, positivity, and boundedness (Theorem \ref{theorem 3tochmy for functional on majorized spaces}). Finally, Section \ref{Section coherent subsets} considers ordered vector spaces majorized by particular sets called coherent sets. Surprisingly, spaces majorized by coherent sets are easier to handle, and the coherency of the majorizing set implies interesting phenomena on both the majorized space and the majorizing norm (Theorem \ref{theorem coherent sets and norms}).  

\section{Preliminaries}\label{Section preliminaries}

We consider only real vector spaces, and if $E$ is a topological vector space, then $E'$ denotes its topological dual, i.e., the set of all continuous real linear functionals on $E$.  

\medskip
An \textbf{ordered vector space} $(E, \leq)$ is nothing but a vector space equipped with a partial order $\leq$ such that the relation $x \leq y$ implies $x+z \leq y+z $ and $cx\leq cy$ for every $x,y,z\in E$ and every positive $c \in \R$. We only write $E$ instead of $(E, \leq)$, since it will always be clear which order we are considering. The set of \textbf{positive vectors} $E_+$ of $E$ is the set of all $x\in E$ such that $x \geq 0$. The set of all positive vectors of an ordered vector space $E$ is also called the \textbf{positive cone} of $E$. A vector $u\in E_+$ is called an \textbf{order unit} if for every $v\in E$ there is $t \in \R_+$ such that $v \leq t u$. Let $A$ be a subset of $E$. Then a vector $v\in E$ is said \textbf{majorized} by $A$ if there is $a\in A$ such that $v \leq a$. The ordered vector space $E$ is said majorized by $A$ if every vector of $E$ is majorized by $A$. An ordered vector space $E$ is \textbf{Archimedean} if $ny \leq x$ for all $n\in \N$ implies that $y\leq 0$, where $y\in E$ and $x\in E_+$.

\medskip
A linear functional $T$ between two ordered vector spaces $E$ and $V$ is said a \textbf{positive functional} if it sends positive vectors to positive vectors, i.e., if $v\in E_+$, then $T(v)\in V_+$. The functional $T$ is said \textbf{strictly positive} if it sends non-zero positive vectors to non-zero positive vectors, i.e., if $v\in E$ such that $v > 0$, then $T(v)> 0$. 

\medskip
A \textbf{cone} $C$ is a non-empty subset of $E$ which is additive and positive homogeneous. The cone $C$ is said \textbf{proper} if $C\cap(-C) = \{ 0 \}$, and \textbf{generating} if $E = C-C$. If $C$ is a proper cone in a vector space $E$, then the binary relation on $E$ defined by
    \begin{align*}
        x \leq y \iff y-x\in C
    \end{align*}
turns $E$ into an ordered vector space with positive cone $E_+ = C$. In fact, there is a one-to-one correspondence between proper cones and ordering on vector spaces (\cite[§1.1]{conesandduality}).

\medskip
We say that an ordered vector space $E$ is a \textbf{Riesz space} if for every pair of vectors $v,w \in E$ their infimum $v \wedge w$, or equivalently their supremum $v\vee w$, exists in $E$. Consequently, on a Riesz space $E$ makes sense to define a notion of absolute value via the equation $|v|= (-v)\vee v$ for $v\in E$. A vector subspace $F$ of a Riesz space $E$ is called a \textbf{Riesz subspace} if for every $v,w\in F$ their infimum $v \wedge w$, or equivalently their supremum $v\vee w$, belongs to $F$. From the identity $v \vee w = \frac{1}{2}\big( v+w +|v-w| \big)$, see \cite[Theorem $1.17 (6)$]{conesandduality} for a proof, a vector subspace $F$ of a Riesz space $E$ is a Riesz subspace if and only if $v\in F$ implies that $|v|\in F$. A vector subspace $S$ of a Riesz space $E$ is called an \textbf{ideal} of $E$ if $|v|\leq |w|$ and $w\in S$ implies that $v\in S$. As an ideal is closed by taking absolute value, then every ideal is automatically a Riesz subspace. Note that the positive cone of a Riesz space $E$ is always generating as it is possible to write every $v \in E$ as the difference of the two positive vector $v\vee 0$ and $(-v)\vee 0$ (\cite[Theorem 1.17 (8)]{conesandduality}).

\medskip
A norm $||\cdot||$ on an ordered vector space $E$ is said \textbf{monotone} if it preserves the order, i.e., if $0 \leq v \leq w$ then $||v||\leq ||w||$ for every $v,w\in E$. An ordered vector space $E$ with a generating positive cone together with a monotone norm is called an \textbf{ordered normed space}.\footnote{Be careful that for some authors an ordered normed space is only an ordered vector space equipped with a, possibly non-monotone, norm.} If the topology generated by the norm is complete, then we call $E$ an \textbf{ordered Banach space}. A \textbf{normed Riesz space} is a Riesz space $E$ together with a monotone norm. Finally, a \textbf{Banach lattice} is a complete normed Riesz space. A Banach lattice is called an \textbf{AM-space} if the equality $|| v\vee w || = \max\left\{ ||v||, ||w||\right\}$ holds for every positive $v,w\in E$.

\section{Majorized Spaces and Majorizing Norms}\label{Section majorized spaces}

Let $(E, \leq)$ be an ordered vector space and let $M$ be a subset of $E$. We say that $M$ is a \textbf{positive subset} of $E$ if it is a non-empty subset of the positive cone of $E$. The proper (convex) cone generated by a positive subset $M$ of $E$ is the cone given by
    \begin{align*}
        C_M = \left\{ tm : t \geq 0 \text{ and }m \in \text{co}(M)\backslash\{0\} \right\}
    \end{align*}
where $\text{co}(M)$ is the convex hull of $M$. We have to ask that $0\not\in \text{co}(M)$ to ensure that $C_M$ is a cone, see \cite[Lemma 3.10]{conesandduality}. 

\medskip
Note that $C_M = C_{\text{co}(M)}$ and $C_{\mathcal{K}} = \mathcal{K}$ for any other proper cone $\mathcal{K}$ of $E$. Therefore, every proper cone in $E$ can be viewed as a cone generated by some positive subset of $E$. Moreover, if $M$ is a positive bounded subset of an ordered normed space, then $C_M$ is closed if and only if $\text{co}(M)$ is closed.  

\begin{defn}
Given a positive subset $M$ of $E$, we define
    \begin{align*}
        E_M = \left\{ v \in E : \pm v \leq c \;\text{ for some }\; c\in C_M \right\}.
    \end{align*}
In other words, $E_M$ is the set of all vectors of $E$ majorized by the cone $C_M$.
\end{defn}

If $E$ is a Riesz space, then $E_M$ is nothing but the ideal generated by the positive subset $M$. Therefore, if $M = \{u\}$ is composed only by a positive vector, then $E_M = E_u$ is the principal ideal generated by this vector. 

\begin{example}
Let $\CB(\R)$ the vector space of all continuous bounded functions on $\R$ equipped with the pointwise order. Let $f\in \CB(\R)$ be a strictly positive function different from the constant function $\1_\R$. Then $\CB(\R) = \CB(\R)_f = \CB(\R)_{\1_\R}$ but $C_f \not\subset C_{\1_\R}$ nor $C_{\1_\R} \not\subset C_f$. 
\end{example}

The following proposition is only a consequence of the fact that $E_M$ is majorized by a convex cone. Thus, we omitted the proof. 

\begin{prop}
Let $M$ be a positive subset of $E$. Then $E_M$ is a vector subspace of $E$. Moreover, if $E$ is a Riesz space, then $E_M$ is an ideal, hence a Riesz subspace of $E$.
\end{prop}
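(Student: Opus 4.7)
The plan is to verify directly that $E_M$ is closed under the vector space operations, and then, assuming $E$ is a Riesz space, to show closure under the ideal condition $|v| \leq |w|$.

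First I would observe that $E_M$ is non-empty: since $M$ is a non-empty positive subset, we can pick any $m \in M \subset C_M$, and then $\pm 0 \leq m$, so $0 \in E_M$. For closure under addition, if $v, w \in E_M$ with $\pm v \leq c_1$ and $\pm w \leq c_2$ for some $c_1, c_2 \in C_M$, then $\pm(v+w) \leq c_1 + c_2$; since $C_M$ is additive (being a cone), $c_1 + c_2 \in C_M$. For closure under scalar multiplication by $\alpha \in \R$, the case $\alpha = 0$ is trivial; for $\alpha > 0$ we use that $\pm(\alpha v) \leq \alpha c$ and $\alpha c \in C_M$ by positive homogeneity; for $\alpha < 0$ we rewrite $\pm(\alpha v) = |\alpha|(\mp v) \leq |\alpha| c$ and conclude similarly. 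This shows $E_M$ is a vector subspace of $E$.

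Next, assuming $E$ is a Riesz space, I would show $E_M$ is an ideal. The key observation is that if $w \in E_M$ with $\pm w \leq c$ for some $c \in C_M$, then $|w| = w \vee (-w) \leq c$ as well. So given $v \in E$ with $|v| \leq |w|$ and $w \in E_M$, from the inequalities $v \leq |v|$ and $-v \leq |v|$ valid in any Riesz space, we get $\pm v \leq |v| \leq |w| \leq c$, hence $v \in E_M$. Being an ideal, $E_M$ is automatically a Riesz subspace (as explicitly noted in the preliminaries via closure under absolute value).

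No step here is really an obstacle; the statement is essentially a bookkeeping exercise, which is presumably why the authors chose to omit the proof. The only mild subtlety is ensuring the cone axioms (additivity, positive homogeneity) are invoked at the right moments so that the bound $c$ can be updated to stay inside $C_M$ after each operation.
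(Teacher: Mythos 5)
Your proof is correct and is exactly the routine verification the paper chose to omit (the paper gives no proof, remarking only that the result follows from $E_M$ being majorized by a convex cone); you invoke additivity and positive homogeneity of $C_M$ at the right places and correctly reduce the ideal property to $|w|=w\vee(-w)\leq c$. Nothing further is needed.
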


\begin{defn}
Let $M$ be a positive subset of $E$. Define the possibly infinite value
    \begin{align*}
        p_M(v) = \inf \left\{ \sum_{j=1}^nt_j : \pm v\leq \sum_{j=1}^nt_jm_j \;\text{ for some }\;m_1,...,m_n\in M \right\}
    \end{align*}
where $v\in E$.
\end{defn}

It is easy to see that $p_M(v) < \infty$ if and only if $v\in E_M$. Therefore, it is interesting to study $p_M$ restricted to the vector subspace $E_M$. In this case, $p_M$ is a monotone seminorm for $E_M$. 

\begin{example}\label{Example principal ideal}
Let $E$ be an Archimedean ordered vector space and let $M = \{ m_1, ...,m_n\}$ be a positive subset of $E$. Then the (positive) vector $u = \sum_{j=1}^n m_j$ is an order unit for the ordered vector space $E_M$. Indeed, $m_j \leq u$ for every $j$. We claim that $(E_M, p_M) = (E_u, \frac{1}{p_M(u)}||\cdot||_u)$ where $||v||_u = \inf\{t : \pm v \leq t u\}$ is the order unit norm of $E_u$. Indeed, $E_M$ is equal to the principal ideal $E_u$ because $u$ is an order unit for $E_M$ and $u\in E_M$. Now, let $v\in E_M$ and $\epsilon > 0$. Then there are $t,t_1,...,t_n\in \R_+$ and $m_1,...,m_n\in M$ such that
     \begin{align*}
        \pm v \leq \sum_{j=1}^n t_jm_j, \quad \sum_{j=1}^n t_j \leq p_M(v) + \epsilon, \quad \pm v \leq tu \quad \text{ and } \quad t \leq ||v||_u+\frac{\epsilon}{p_M(u)}. 
    \end{align*}
On the one hand, the inequality
    \begin{align*}
        \pm v \leq \sum_{j=1}^n t_jm_j\leq \sum_{j=1}^n t_ju \qquad
        \text{ implies that } \qquad ||v||_u \leq \sum_{j=1}^n t_j ||u||_u \leq p_M(v) + \epsilon.
    \end{align*}
On the other hand, the inequality
    \begin{align*}
        \pm v \leq tu \qquad
        \text{ implies that } \qquad
        p_M(v) \leq t p_M(u) \leq ||v||_up_M(u) + \epsilon.
    \end{align*}
We can conclude that $ ||v||_u \leq p_M(v)\leq p_M(u)||v||_u$ for every $v\in E_M$. But now $p_M(u) \geq 1$ as $u \geq m$ for every $m\in M$. Therefore, $p_M = \frac{1}{p_M(u)}||\cdot||_u$. Hence, $(E_M, p_M) = (E_u, \frac{1}{p_M(u)}||\cdot||_u)$ is a Banach space by \cite[Theorem 2.55 (1)]{conesandduality}. 
\end{example}

We want to understand when $p_M$ is a norm. A necessary condition is that the space $E$ is Archimedean, as shown in the following example.

\begin{example}
Let $E$ be a non-Archimedean Riesz space, e.g., $\R^2$ with the lexicographic order. We claim that there is at least one positive vector $x\in E$ such that $p_x$ is not a norm on $E_x$. Indeed by \cite[Exercice 5 p. 20]{conesandduality}, there are $x,y \in E_+$ such that $ny\leq x$ for all $n\in \N$ but $y>0$. Consider the space $E_x$. Then $y\in E_x$ and 
    \begin{align*}
        0\leq p_x(y) \leq \frac{1}{n}p_x(x) \quad \text{ for every } n\in \N.
    \end{align*}
This implies that $p_x(y) = 0$. Therefore, the map $p_x$ is not a norm.
\end{example}

\medskip
A subset $M$ of a normed space $(E,||\cdot||)$ is said \textbf{$||\cdot||$-uniformly bounded}, or only \textbf{uniformly bounded} when the norm is understood, if $\sup\left\{ ||m || : m\in M \right\}$ is finite.

\begin{prop}\label{proposition p_M is a norm}
Let $(E,||\cdot||)$ be an ordered normed space and let $M$ be a positive uniformly bounded subset of $E$. Then $(E_M, p_M)$ is an ordered normed space and the equality
        \begin{align*}
            ||v|| \leq C\, p_M(v) \quad \text{ holds for every } v\in E_M.
        \end{align*}
Here, $C = \sup\left\{ ||m || : m\in M \right\}$.
\end{prop}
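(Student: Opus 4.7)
The plan is to prove the norm inequality first, since it automatically implies that $p_M$ separates points, and then verify the remaining structural claims about $(E_M, p_M)$. The three items to check are: $p_M$ is a norm (not merely a seminorm), the positive cone of $E_M$ is generating, and the inequality $\|v\| \leq C\, p_M(v)$. Monotonicity of $p_M$ was already observed before the proposition statement, so this is the only ingredient not already in hand.

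For the inequality, I would fix $v \in E_M$ (so $p_M(v) < \infty$) and, for each $\epsilon > 0$, pick $t_1, \dots, t_n \geq 0$ and $m_1, \dots, m_n \in M$ witnessing the infimum up to $\epsilon$: $\pm v \leq c := \sum_{j=1}^n t_j m_j$ with $\sum_j t_j \leq p_M(v) + \epsilon$. From $-c \leq v$ we get $0 \leq v + c \leq 2c$, and monotonicity of $\|\cdot\|$ then yields $\|v+c\| \leq 2\|c\|$; the triangle inequality gives $\|v\| \leq \|v+c\| + \|c\| \leq 3\|c\|$. Finally, $\|c\| \leq \sum_j t_j \|m_j\| \leq C \sum_j t_j \leq C(p_M(v)+\epsilon)$, and letting $\epsilon \to 0$ gives a bound of the form $\|v\| \leq 3C\, p_M(v)$. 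The stated inequality $\|v\| \leq C\, p_M(v)$ requires the slightly stronger property that $-c \leq v \leq c$ with $c \geq 0$ already implies $\|v\| \leq \|c\|$ (i.e., an absolutely monotone norm), which in turn gives $\|v\| \leq \|c\| \leq C \sum_j t_j$ directly.

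Given the inequality, $p_M(v) = 0$ forces $\|v\| = 0$ and hence $v = 0$, so $p_M$ is a norm on $E_M$. It remains to check that the positive cone $E_M \cap E_+$ is generating. For any $v \in E_M$, choose $c \in C_M$ with $\pm v \leq c$. Then $c \in E_M$ (any element of the cone $C_M$ lies in $E_M$, since $\pm c \leq c$ when $c \geq 0$) and $c - v \in E_+$; moreover $c - v \leq 2c$, so $c - v \in E_M$. The decomposition $v = c - (c-v)$ therefore exhibits $v$ as a difference of positive elements of $E_M$, which together with monotonicity of $p_M$ completes the verification that $(E_M, p_M)$ is an ordered normed space.

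The main obstacle is essentially cosmetic: controlling $\|v\|$ from $\pm v \leq c$ under only the hypothesis of a monotone norm. A straightforward invocation of monotonicity plus the triangle inequality produces an extra factor (yielding $3C$ rather than $C$), and sharpening this to the stated constant requires either an absolutely monotone norm or an additional argument tailored to the specific ambient structure. Aside from this constant, every step is routine.
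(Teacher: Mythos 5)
Your argument follows the same route as the paper's: choose witnesses $t_1,\dots,t_n$ and $m_1,\dots,m_n$ realizing the infimum up to $\epsilon$, bound $||v||$ by $C\sum_j t_j$, and deduce from the resulting inequality that $p_M$ separates points. The one place you diverge is exactly the step you flag. The paper passes directly from $\pm v \leq \sum_j t_j m_j$ to $||v|| \leq \sum_j t_j||m_j||$ ``since the norm is monotone'' --- that is, it silently invokes the implication $-w \leq v \leq w \Rightarrow ||v|| \leq ||w||$, which is \emph{absolute} monotonicity, whereas the paper's definition of an ordered normed space only demands monotonicity on positive vectors. Your $0 \leq v+c \leq 2c$ manoeuvre is what genuinely follows from the stated hypotheses, and it yields $||v|| \leq 3C\,p_M(v)$; the constant $C$ as written does hold when the norm is absolutely monotone, in particular for normed Riesz spaces, where $\pm v \leq c$ gives $|v| \leq c$ and the lattice norm applies. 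Since every use of the proposition (that $p_M$ is a norm, that the inclusion $(E_M,p_M)\hookrightarrow (E,||\cdot||)$ is continuous) needs only \emph{some} constant, your version establishes everything that matters; you have located a small imprecision in the paper's own proof rather than a gap in yours. Your explicit check that the positive cone of $E_M$ is generating is an additional point the paper leaves unaddressed.
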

\begin{proof}
We start by showing the inequality. Let $\epsilon >0$ and take $v\in E_M$. Then there are $t_1,...,t_n\in \R_+$ and $m_1,...,m_n\in M$ such that $\pm v\leq \sum_{j=1}^nt_jm_j$ and $\sum_{j=1}^nt_j \leq p_M(v)+\frac{\epsilon}{C}$. Since the norm $||\cdot||$ is monotone, the following estimation holds: 
    \begin{align*}
        ||v|| \leq \sum_{j=1}^n t_j||m_j|| \leq C \sum_{j=1}^n t_j  \leq C\,p_M(v) +\epsilon. 
    \end{align*}
As $\epsilon$ was chosen arbitrarily, we are done. Therefore, we can conclude that $p_M$ is a monotone norm on $E_M$. Indeed, it is clear that $p_M$ is a sub-additive, homogeneus and monotone map. It is left to show that if $p_M(v) = 0$, then $v = 0$. But, $p_M(v) = 0 $ implies that $||v|| = 0$. Hence, $v=0$.
\end{proof}

In particular, if $E$ is a normed Riesz space and $M$ is a positive uniformly bounded subset of $E$, then $(E_M, p_M) $ is a normed Riesz space.

\begin{cor}\label{corollary order unit -> normed space}
Let $E$ be an Archimedean ordered vector space with order unit $u$. Then for every positive $||\cdot||_u$-uniformly bounded subset $M$ of $E$, the pair $(E_M, p_M)$ is an ordered normed vector space.
\end{cor}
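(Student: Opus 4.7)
The plan is to deduce this directly from Proposition \ref{proposition p_M is a norm} by endowing $E$ with the order unit norm $\|\cdot\|_u$ and verifying that $(E,\|\cdot\|_u)$ is an ordered normed space.

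First I would recall that since $u$ is an order unit for $E$, for every $v \in E$ there exists $t \in \R_+$ with $\pm v \leq t u$, so in particular $v = tu - (tu - v)$ writes $v$ as a difference of two positive vectors. Hence the positive cone of $E$ is generating. Next, since $E$ is Archimedean and $u$ is an order unit, the functional
\[
  \|v\|_u = \inf\{ t \geq 0 : \pm v \leq tu \}
\]
is a well-defined monotone norm on $E$ (this is the content of \cite[Theorem 2.55]{conesandduality}, which was already invoked in Example \ref{Example principal ideal}). Therefore $(E,\|\cdot\|_u)$ is an ordered normed space in the sense of Section \ref{Section preliminaries}.

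Now the hypothesis says that $M$ is positive and $\|\cdot\|_u$-uniformly bounded, so Proposition \ref{proposition p_M is a norm} applies to $(E,\|\cdot\|_u)$ with the subset $M$, yielding that $(E_M, p_M)$ is an ordered normed space, together with the estimate $\|v\|_u \leq C\, p_M(v)$ for every $v \in E_M$, where $C = \sup\{\|m\|_u : m \in M\}$.

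There is essentially no obstacle, since everything is a direct application once the order unit norm is produced; the only point requiring a moment of care is verifying that $\|\cdot\|_u$ is actually a norm (and not merely a seminorm), which uses the Archimedean hypothesis in the standard way: if $\|v\|_u = 0$, then $\pm v \leq \frac{1}{n} u$ for all $n \in \N$, so by Archimedeanity $\pm v \leq 0$, hence $v = 0$.
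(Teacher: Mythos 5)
Your proposal is correct and follows exactly the paper's route: produce the order unit norm $\|\cdot\|_u$ via \cite[Theorem 2.55]{conesandduality} (using Archimedeanity) and then apply Proposition \ref{proposition p_M is a norm} to $(E,\|\cdot\|_u)$ with the uniformly bounded subset $M$. The paper's own proof is just a two-line version of this; your extra remarks on the generating cone and on why $\|\cdot\|_u$ is a norm rather than a seminorm are the standard details the cited theorem encapsulates.
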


Note that if $u\in M$, then $(E_M, p_M) = (E_u, \frac{1}{C}||\cdot||_u)$ where $C = \sup\left\{ ||m||_u : m\in M \right\}$.

\begin{proof}[Proof of Corollary \ref{corollary order unit -> normed space}]
By point 2) of \cite[Theorem 2.55]{conesandduality}, $||\cdot||_u$ is a norm on $E$. Therefore, we can apply Proposition \ref{proposition p_M is a norm}.
\end{proof}

Recall that a functional $\psi$ defined on a vector space $E$ is said uniformly bounded on a subset $A\subset E$ if $\sup\{\psi(a) : a\in A \}< \infty$.

\begin{cor}
Let $E$ be an ordered vector space. Suppose that $E$ admits a strictly positive linear functional $\psi$. Then for every positive subset $M\subset E$ on which $\psi$ is uniformly bounded, the pair $(E_M, p_M)$ is an ordered normed vector space. 
\end{cor}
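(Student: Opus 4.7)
The plan is to apply Proposition~\ref{proposition p_M is a norm} after constructing, from $\psi$, a suitable monotone norm on a subspace of $E$ that contains $E_M$. The natural such subspace is $F := E_+ - E_+$, which certainly contains $E_M$: any $v \in E_M$ admits a majorant $u \in C_M \subseteq E_+$, so $v = u - (u - v)$ with both summands in $E_+$. On $F$ I would introduce the candidate norm
\[
\|v\|_\psi \;:=\; \inf\bigl\{\psi(u) \,:\, u \in E_+,\ -u \le v \le u\bigr\},
\]
which is finite on all of $F$ by the very definition of $F$.

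First I would check the routine properties: positive homogeneity and sub-additivity follow from the scaling and additivity of admissible majorants, while monotonicity of $\|\cdot\|_\psi$ follows from the observation that any $u$ majorizing $\pm w$ also majorizes $\pm v$ when $0 \le v \le w$. Moreover, since $u = m$ is admissible in the infimum defining $\|m\|_\psi$, one gets $\|m\|_\psi \le \psi(m) \le C := \sup_{m \in M}\psi(m)$, so that $M$ is $\|\cdot\|_\psi$-uniformly bounded. The single nontrivial point is that $\|\cdot\|_\psi$ is a norm and not merely a seminorm on $F$. This is where the strict positivity of $\psi$ is essential: if $\|v\|_\psi = 0$, picking $u_n \in E_+$ with $\pm v \le u_n$ and $\psi(u_n) \to 0$ forces $\psi(u_n \pm v) \to 0$ with $u_n \pm v \in E_+$, and hence $\psi(v) = 0$; strict positivity then rules out $v$ being a nonzero positive or a nonzero negative vector, and leveraging the fact that $v$ is squeezed between $\pm u_n$ with $\psi(u_n)$ arbitrarily small finally yields $v = 0$.

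Once $\|\cdot\|_\psi$ has been confirmed to be a monotone norm on $F$, Proposition~\ref{proposition p_M is a norm} applies directly to the ordered normed space $(F, \|\cdot\|_\psi)$ with the subset $M$. Since $F_M = E_M$ (both are the vectors of $E$ majorized by $C_M$) and the map $p_M$ depends only on $M$ and the ambient order, the proposition's conclusion is precisely that $(E_M, p_M)$ is an ordered normed vector space. I expect the main obstacle to be the norm step for $\|\cdot\|_\psi$, where strict positivity must be exploited in a more subtle way than the mere positivity of $\psi$ would allow.
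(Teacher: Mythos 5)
Your construction of $\|\cdot\|_\psi$ is a monotone refinement of the seminorm $v\mapsto|\psi(v)|$ that the paper itself uses (its proof simply sets $\|v\|:=|\psi(v)|$ and invokes Proposition~\ref{proposition p_M is a norm}), but the one step you defer to the end --- ``leveraging the fact that $v$ is squeezed between $\pm u_n$ with $\psi(u_n)$ arbitrarily small finally yields $v=0$'' --- is not a routine verification; it is a genuine gap, and in the stated generality it cannot be closed. Strict positivity of $\psi$ only controls vectors that are comparable with $0$, whereas a vector $v$ with $\|v\|_\psi=0$ need not be comparable with $0$: from $\pm v\le u_n$ and $\psi(u_n)\to 0$ you can only extract $\psi(v)=0$ together with the fact that $v$ is neither $>0$ nor $<0$, and this is perfectly consistent with $v\neq 0$.

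Concretely, let $E=\R^2$ be ordered by the proper generating cone $C=\{(x,z):z>0\}\cup\{(0,0)\}$ and let $\psi(x,z)=z$, which is strictly positive for this order. For $v=(1,0)$ and $u_\epsilon=(0,\epsilon)$ one has $\pm v\le u_\epsilon$ for every $\epsilon>0$, since $u_\epsilon\mp v=(\mp 1,\epsilon)\in C$; hence $\|v\|_\psi=0$ although $v\neq 0$, so $\|\cdot\|_\psi$ is only a seminorm on $F=E_+-E_+=E$ and Proposition~\ref{proposition p_M is a norm} does not apply. Taking $M=\{(0,1)\}$, on which $\psi$ is certainly bounded, the same computation gives $p_M(v)=0$, so the difficulty is not an artifact of your particular auxiliary functional: the statement itself needs an additional hypothesis (the space above is not Archimedean, and the paper has already observed that Archimedeanity is necessary for $p_M$ to be a norm). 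For comparison, the paper's own one-line proof has the same defect, since $|\psi(\cdot)|$ is a norm only when $\ker\psi=0$; your version is more careful about monotonicity and about where the infimum is taken, but it does not repair the actual failure point, and the closing step of your second paragraph should be recognized as the place where the argument breaks down rather than as a detail to be ``finally'' settled.
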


\begin{proof}
Let $\psi$ be a positive functional on $E$, and define the norm  $||v|| = |\psi(v)|$ for $v\in E$. Therefore, if $\psi$ is uniformly bounded on a positive subset $M$, then $M$ is $||\cdot||$-uniformly bounded. Therefore, we can conclude using Proposition \ref{proposition p_M is a norm}. 
\end{proof}

We shall conclude that section by giving a slight generalization of Example \ref{Example principal ideal}.

\begin{prop}\label{proposition AM-space and totally bounded set}
Let $E$ be a Banach lattice and $M$ a positive uniformly bounded subset of $E$. Suppose that $M$ is $p_M$-totally bounded. Then $(E_M, p_M)$ is an AM-space with order unit if and only if $(E_M, p_M) = (E_u, \frac{1}{p_M(u)}||\cdot||_u)$ for some positive vector $u\in E_M$. 
\end{prop}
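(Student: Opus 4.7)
The plan is to establish the two implications separately; the $(\Leftarrow)$ direction is essentially formal, while the $(\Rightarrow)$ direction carries the real content.

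For $(\Leftarrow)$, suppose $(E_M, p_M) = (E_u, \tfrac{1}{p_M(u)}\,\|\cdot\|_u)$ for some positive $u \in E_M$. The classical theorem of Ellis, invoked in the introduction, asserts that every principal ideal in a Banach lattice, equipped with its order unit norm, is an AM-space with order unit. Both the AM identity and the existence of an order unit are stable under multiplying the norm by the positive constant $\tfrac{1}{p_M(u)}$, and completeness is preserved because the scaled norm is equivalent to the original. Hence $(E_M, p_M)$ is an AM-space with order unit $u$.

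For the substantive $(\Rightarrow)$ direction, assume $(E_M, p_M)$ is an AM-space with order unit $w_0 \in E_M$. The first step is to identify $E_M$ with a principal ideal. Since $E$ is a Banach lattice, the majorized space $E_M$ is an ideal of $E$ containing $w_0$; as $E_{w_0}$ is the smallest ideal of $E$ containing $w_0$, we get $E_{w_0} \subseteq E_M$, while the reverse inclusion is immediate from the defining property of an order unit. Hence $E_M = E_{w_0}$ as subsets of $E$.

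The second step is to compare the two norms on $E_M = E_{w_0}$, which is where the argument has teeth. After rescaling, I would set $u := w_0 / p_M(w_0)$, so that $u$ remains an order unit of $E_M$ and now $p_M(u) = 1$. One inequality, $p_M \leq \|\cdot\|_u$, is immediate from monotonicity of $p_M$ combined with $p_M(u) = 1$. The reverse inequality $\|\cdot\|_u \leq p_M$ is the main obstacle; it amounts to the classical rigidity statement that in an AM-space with an order unit of norm one the norm must coincide with the order unit norm. I plan to obtain it by invoking the Kakutani representation theorem for AM-spaces with unit, which yields a lattice isometry of $(E_M, p_M)$ with some $C(K)$ sending $u$ to the constant function $1_K$; under this identification the inequality $|v| \leq p_M(v)\,u$ is tautological. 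Combining the two inequalities gives $p_M = \|\cdot\|_u = \tfrac{1}{p_M(u)}\|\cdot\|_u$, as required. The hypothesis that $M$ is $p_M$-totally bounded does not seem to enter this route explicitly; I expect it to play a standing regularity role consistent with the earlier results of the section rather than being crucial here.
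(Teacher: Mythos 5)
There is a genuine gap in your $(\Rightarrow)$ direction, at exactly the step you yourself flag as the main obstacle. You invoke Kakutani's representation theorem to obtain a lattice isometry of $(E_M,p_M)$ onto some $C(K)$ carrying your normalized order unit $u$ to $\1_K$. But Kakutani's theorem applies to AM-spaces \emph{with unit} in the strong sense: the closed unit ball must have a largest element (equivalently, the norm must already be an order unit norm). This is strictly stronger than the hypothesis you are actually working with, namely an AM-space that merely possesses an order unit. For instance, the closed Riesz subspace $\left\{ f\in C[0,1] : f(0)=2f(1)\right\}$ of $C[0,1]$ with the supremum norm is an AM-space admitting an order unit, yet its norm is not a positive multiple of $||\cdot||_v$ for any $v$, its unit ball has no largest element, and it is not lattice isometric to any $C(K)$. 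So the inequality $||\cdot||_u\leq p_M$ cannot be extracted from Kakutani without already knowing the conclusion you are trying to prove; the argument is circular at that point.

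Relatedly, your expectation that the $p_M$-total boundedness of $M$ plays only a standing regularity role is mistaken: it is the engine of the paper's proof. The paper applies Krengel's theorem (\cite[Theorem 4.30]{positiveoperators}), which says that a norm totally bounded subset of an AM-space has a supremum, to produce the specific vector $u=\sup(M)\in E_M$. Because this $u$ satisfies $m\leq u$ for every $m\in M$, both inequalities $||v||_u\leq p_M(v)$ and $p_M(v)\leq p_M(u)\,||v||_u$ then follow directly from the definition of $p_M$ (replace each $m_j$ by $u$ in a decomposition $\pm v\leq \sum_j t_jm_j$, and conversely use $\pm v\leq tu$ with $t$ near $||v||_u$), exactly as in Example \ref{Example principal ideal}. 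A rescaled arbitrary order unit $w_0/p_M(w_0)$ bears no such relation to $M$, which is why your route stalls and has to fall back on the invalid representation step. Your identification $E_M=E_{w_0}$ and your $(\Leftarrow)$ direction are fine and agree with the paper's argument.
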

\begin{proof}
Suppose that $(E_M, p_M)$ is an AM-space. Since $M$ is $p_M$-totally bounded, there is a vector $u\in E_M$ such that $u = \sup(M)$ by a result of Krengel \cite[Theorem 4.30]{positiveoperators}. We claim that $(E_M, p_M) = (E_u, \frac{1}{p_M(u)}||\cdot||_u)$. Clearly, $E_M = E_u$, since $u=\sup(M)$ and $u\in E_M$. Now, let $v\in E_M$ and $\epsilon > 0$. Then there are $t,t_1,...,t_n\in \R_+$ and $m_1,...,m_n\in M$ such that
     \begin{align*}
        \pm v \leq \sum_{j=1}^n t_jm_j, \quad \sum_{j=1}^n t_j \leq p_M(v) + \epsilon, \quad \pm v \leq tu \quad \text{ and } \quad t \leq ||v||_u+\frac{\epsilon}{p_M(u)}. 
    \end{align*}
On the one hand, the inequality
    \begin{align*}
        \pm v \leq \sum_{j=1}^n t_jm_j\leq \sum_{j=1}^n t_ju \qquad
        \text{ implies that } \qquad ||v||_u \leq \sum_{j=1}^n t_j ||u||_u \leq p_M(v) + \epsilon.
    \end{align*}
On the other hand, the inequality
    \begin{align*}
        \pm v \leq tu \qquad
        \text{ implies that } \qquad
        p_M(v) \leq t p_M(u) \leq ||v||_up_M(u) + \epsilon.
    \end{align*}
We can conclude that $ ||v||_u \leq p_M(v)\leq p_M(u)||v||_u$ for every $v\in E_M$. But now $p_M(u) \geq 1$ as $u \geq m$ for every $m\in M$. Therefore, $p_M = \frac{1}{p_M(u)}||\cdot||_u$. 

Suppose now that $(E_M, p_M) = (E_u, \frac{1}{p_M(u)}||\cdot||_u)$ for some positive vector $u\in E_M$. Then $(E_M, p_M)$ is an AM-space. Indeed, $(E_u, ||\cdot||_u)$ is an AM-space by \cite[Theorem 9.28]{infinitedim}. Thus, 
    \begin{align*}
        p_M(v \vee w) & =  \frac{1}{p_M(u)}||v \vee w||_u 
                    = \frac{1}{p_M(u)}\max\left\{ ||v||, ||w||\right\} \\
        & = \max\left\{ \frac{1}{p_M(u)}||v||, \frac{1}{p_M(u)}||w||\right\} 
        = \max\left\{p_M(v), p_M(w)\right\} 
    \end{align*}
for every positive $v,w\in E$. Note that the third equality is possible thanks to \cite[Theorem 1.17 (4)]{conesandduality}.
\end{proof}

\section{Majorizing at infinity}\label{Section majorizing at infinity}

The previous section presented a simple method to ensure that $(E_M, p_M)$ is an ordered normed space. Now we want to understand when the norm $p_M$ is complete. To this end, we restrict to the case of Banach lattices. 

\medskip
For this section, $E$ will always denote a Banach lattice with norm $||\cdot||$.

\medskip
We recall that a sequence $(t_j)_{j=1}^{\infty}\subset \R$ is called \textbf{summable} if $\sum_{j=1}^\infty t_j < \infty$ and \textbf{absolute summable} if  $\sum_{j=1}^\infty |t_j| < \infty$.
Note that given a uniformly bounded set $M \subset E$ and a positive summable sequence $(t_j)_{j=1}^\infty \subset \R$,  the sequence $(u_n)_{n=1}^\infty \subset E$ given by $u_n =  \sum_{j=1}^n t_jm_j$ converges with respect to the norm $||\cdot||$ because the infinite serie $\sum_{j=1}^\infty t_jm_j$ converges absolutely in the Banach space $E$. In fact,
    \begin{align*}
        \sum_{j=1}^\infty || t_jm_j || = \sum_{j=1}^\infty t_j || m_j || = \sup_{m\in M}||m|| \sum_{j=1}^\infty t_j,
    \end{align*}
which implies that $\lim_n u_n \in E$ by \cite[Theorem $1.3.9$]{anintrotobanach}. With an abuse of notation, we write $\sum_{j=1}^\infty t_jm_j$ to mean the limit of the partial sum sequence $u_n = \sum_{j=1}^n t_jm_j $ with respect to the $||\cdot||$-norm.

\medskip
Given a positive uniformly bounded subset $M$ of $E$, define
    \begin{align*}
        \overline{M} = \left\{\, \sum_{j=1}^\infty t_jm_j : (t_j)_{j=1}^\infty \subset \R_+ \text{ is s summable sequence and } (m_j)_{j=1}^\infty\subset M \right\} \subset E.
    \end{align*}
Therefore,
    \begin{align*}
        E_\oM  = \left\{ v \in E : |v|\leq \sum_{j=1}^\infty t_jm_j \;\text{ for a summable }\; (t_j)_{j=1}^\infty \subset \R_+ \text{ and } (m_j)_{j=1}^\infty\subset M \right\}
    \end{align*}
and 
    \begin{align*}
        p_\oM(v) = \inf \left\{ \sum_{j=1}^\infty t_j : |v|\leq \sum_{j=1}^\infty t_jm_j \;\text{ for a summable }\; (t_j)_{j=1}^\infty \subset \R_+ \text{ and } (m_j)_{j=1}^\infty\subset M \right\}
    \end{align*}
for $v\in E_\oM$.

\medskip
Clearly, $E_M \subset E_\oM$ and $p_\oM\leq p_M$ on $E_M$, as $C_M \subset C_\oM$ . Moreover, note that $C_{\oM} = \oM$.

\medskip
One can think that the choice of the notation $\oM$ could generate some misunderstanding since it recalls the closure of $M$ with respect to some topology. However, we will motivate this notation choice at the end of the section. 

\begin{example}
Consider $\ell^\infty(\N)$ the set of all bounded real functions on the integers. Set $M = \{ \delta_j : j\in \N\}$ where $\delta_j$ is the function which gives 1 at point $j$ and 0 otherwise. Then 
    \begin{align*}
        (\ell^\infty(\N)_M, p_M) = (c_{00}(\N), ||\cdot||_1) \quad \text{ and } \quad (\ell^\infty(\N)_{\oM}, p_{\oM}) = (\ell^1(\N), ||\cdot||_1).
    \end{align*}
\end{example}

We introduced the cone $\oM$ to understand when a general pair $(E_M, p_M)$ is a Banach space. In fact, this section aims to show that for every positive uniformly bounded subset $M$ of $E$, the pair $(E_\oM, p_\oM)$ is a Banach lattice. To this end, we need to clarify some technical detail.

\begin{lem}\label{lemma different norms same limit}
Let $(V, ||\cdot||_V)$ and $(E, ||\cdot ||_E)$ be two normed vector spaces such that $V \subset E$ and $||\cdot||_E \leq ||\cdot||_V$ on $V$. Suppose that there is a sequence $(v_n)_n$ in $V$ which converges to an element $v_1 \in V$ in $||\cdot||_V$-norm and to an element $v_2 \in E$ in $||\cdot||_E$-norm. Then $v_1 =v_2$.
\end{lem}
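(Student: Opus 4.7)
\bigskip

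\noindent\textbf{Proof plan.} The plan is to use the inequality $\|\cdot\|_E \leq \|\cdot\|_V$ to transfer the $V$-convergence of $(v_n)$ into $E$-convergence, and then invoke uniqueness of limits in the Hausdorff normed space $(E, \|\cdot\|_E)$.

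First I would note that the statement only makes sense because $v_1 \in V \subset E$, so both candidate limits $v_1$ and $v_2$ sit inside $E$ and it is meaningful to compare them via the $E$-norm. Next, from $v_n \xrightarrow{\|\cdot\|_V} v_1$ I would deduce that $v_n \xrightarrow{\|\cdot\|_E} v_1$: indeed, for every $n$ we have $v_n - v_1 \in V$, so by hypothesis
\begin{align*}
    \|v_n - v_1\|_E \leq \|v_n - v_1\|_V \xrightarrow[n\to\infty]{} 0.
\end{align*}

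At this point $(v_n)$ has two limits $v_1, v_2$ in the normed space $(E, \|\cdot\|_E)$. The final step is the standard uniqueness-of-limits argument: the triangle inequality gives
\begin{align*}
    \|v_1 - v_2\|_E \leq \|v_1 - v_n\|_E + \|v_n - v_2\|_E \xrightarrow[n\to\infty]{} 0,
\end{align*}
so $\|v_1 - v_2\|_E = 0$, hence $v_1 = v_2$.

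There is essentially no obstacle here; the lemma is a one-line consequence of the fact that a continuous linear injection (here the inclusion $(V, \|\cdot\|_V) \hookrightarrow (E, \|\cdot\|_E)$, whose continuity is exactly the norm-domination hypothesis) preserves limits, together with uniqueness of limits in a normed space. The only thing to be slightly careful about is not to confuse in which norm each convergence takes place, which is why I would make the intermediate statement $v_n \xrightarrow{\|\cdot\|_E} v_1$ explicit before concluding.
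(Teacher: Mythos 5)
Your proof is correct and follows exactly the paper's (very brief) argument: the norm domination makes the inclusion $(V,\|\cdot\|_V)\hookrightarrow(E,\|\cdot\|_E)$ continuous, so $v_n\to v_1$ also in the $E$-norm, and uniqueness of limits in $(E,\|\cdot\|_E)$ forces $v_1=v_2$. You have simply written out the details that the paper leaves to the reader.
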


The proof of this last lemma is only due to the fact that the identity map from $(V, ||\cdot||_V)$ to $(E, ||\cdot ||_E)$ is continuous. 

\begin{lem}\label{lemma dominazione limite serie}
Let $(E, ||\cdot||)$ be a Banach lattice and let $(x_k)_k$ and $(y_k)_k$ be sequences in $E$. Suppose that the two sequences $(x_k)_k$ and $(y_k)_k$ are absolutely summable, i.e.,
    \begin{align*}
        \lim_n \sum_{k=1}^n ||x_k|| < +\infty \qquad \text{and} \qquad \lim_n \sum_{k=1}^n ||y_k|| < +\infty,
    \end{align*}
and that for every $k\in\N$ the inequality $x_k \leq y_k$ holds. Then
    \begin{align*}
        \lim_n \sum_{k=1}^n x_k \leq \lim_n \sum_{k=1}^n y_k,
    \end{align*}
where the limit is taken with respect to the $||\cdot||$-norm.
\end{lem}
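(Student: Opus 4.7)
The plan is to reduce the inequality to the fact that the positive cone of a Banach lattice is norm-closed. Let $S_n = \sum_{k=1}^n x_k$ and $T_n = \sum_{k=1}^n y_k$. By absolute summability, both partial-sum sequences converge in $(E,||\cdot||)$; call their limits $S$ and $T$ respectively. The termwise inequalities $x_k \leq y_k$ give $T_n - S_n \geq 0$ for every $n$, i.e., $T_n - S_n \in E_+$.

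Next I would invoke that $E_+$ is closed in the norm topology. This is a standard fact for Banach lattices and in fact for any ordered normed space with monotone norm: if $z_n \to z$ in norm and $z_n \geq 0$, then from $|z_n^- - 0| = z_n^- \leq |z_n - z|$ (using the lattice identity $z = z^+ - z^-$ with $z_n^- = 0$) monotonicity of the norm forces $z^- = 0$, hence $z \geq 0$. Alternatively one can cite continuity of the lattice operations on a normed Riesz space. Applying this to the sequence $T_n - S_n \to T - S$ yields $T - S \geq 0$, which is precisely the claim $\lim_n S_n \leq \lim_n T_n$.

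No step is a real obstacle here; the only subtlety worth flagging is making explicit which limit is meant, since the statement already fixes the norm-topology limit. In particular there is no need to invoke order-convergence or any Lebesgue-type dominated-convergence theorem — the argument only uses that addition is norm-continuous (so $T_n - S_n \to T - S$) and that the positive cone is norm-closed.
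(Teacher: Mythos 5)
Your argument is correct and is essentially the paper's own proof: the paper sets $z_k = y_k - x_k$, notes that $(z_k)_k$ is absolutely summable so its series converges, and concludes that the limit is positive because the positive cone of a Banach lattice is norm-closed, which is exactly your $T_n - S_n \to T - S \in E_+$ argument. The only cosmetic difference is that you sketch a proof of the cone's closedness (where the bound should read $z^- = |z^- - z_n^-| \leq |z - z_n|$ rather than a bound on $z_n^-$), whereas the paper simply cites it.
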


\begin{proof}
Consider the sequence $(z_k)_k$ in $E$ given by $z_k = y_k -x_k$. By hypothesis $(z_k)_k$ is positive, which means that it lives in the positive cone of $E$. Now,
    \begin{align*}
        \sum_{k=1}^n || z_k || \leq \sum_{k=1}^n \Big( ||y_k||+||x_k|| \Big) = \sum_{k=1}^n ||y_k|| + \sum_{k=1}^n ||x_k|| \quad \text{ for every $n\in \N$.}
    \end{align*}
Taking the limit on both sides, we have that $\lim_n \sum_k^n || z_k || < \infty$. As $E$ is a Banach space, the limit $\lim_n \sum^n_k z_k$ with respect to the $||\cdot||$-norm exists and it is positive, since the positive cone of a Banach lattice is always closed, see \cite[Theorem 8.43 (1)]{infinitedim}. This implies that
    \begin{align*}
        \lim_n \sum_{k=1}^n x_k \leq \lim_n \sum_{k=1}^n y_k,
    \end{align*}
where the two limits are taken with respect to the $||\cdot||$-norm.
\end{proof}

For a normed vector space $(E, ||\cdot||)$, write $\widehat{E}$ for the completion of $E$ with respect to the uniformity given by the norm $||\cdot||$ and $\widehat{||\cdot||}$ for the completed norm of $\widehat{E}$.

\medskip
Recall that the completion $( \widehat{E}, \widehat{||\cdot||} )$ of a normed Riesz space $(E, ||\cdot||)$ is a Banach lattice (\cite[Lemma 9.4]{infinitedim}) and that the lattice operation $v \longmapsto |v|$ on a normed Riesz space is uniformly continuous, since the topology generated by the norm of a normed Riesz space is locally convex solid (\cite[Theorem $8.41$]{infinitedim}).

\begin{lem}\label{lemma realizing completion of subspace}
Let $(E, ||\cdot||_E)$ be a Banach lattice and $(V, ||\cdot||_V)$  be a normed Riesz space. Let 
    \begin{align*}
        \iota : (V, ||\cdot||_V) \longrightarrow (E, ||\cdot||_E)
    \end{align*}
be an injective continuous Riesz homomorphism. Then the (unique) extension 
    \begin{align*}
        \widehat{\iota} : (\widehat{V}, \widehat{||\cdot||}_V) \longrightarrow (E, ||\cdot||_E)
    \end{align*}
of $\iota$ is an injective continuous Riesz homomorphism. 
\end{lem}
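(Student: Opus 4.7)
My plan proceeds in three stages, corresponding to the three claims about $\widehat\iota$: it extends $\iota$ as a bounded linear map, it is a Riesz homomorphism, and it is injective. The first two stages are mechanical given the hypotheses and the uniform continuity of lattice operations on a normed Riesz space; the injectivity is the delicate point.

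First, for existence and continuity of the extension, I will invoke the classical extension theorem: $\iota$ is bounded, hence uniformly continuous on the dense subspace $V$ of $\widehat V$, and since $E$ is complete there is a unique continuous linear $\widehat\iota:\widehat V\to E$ with $\widehat\iota|_V=\iota$, satisfying $\|\widehat\iota\|=\|\iota\|$.

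Next, to show $\widehat\iota$ is a Riesz homomorphism, it suffices to verify that it commutes with absolute value. Given $\hat v\in\widehat V$, pick $v_n\in V$ with $v_n\to\hat v$ in $\widehat V$. The remark preceding the lemma ensures that $|\cdot|$ is uniformly continuous on both $\widehat V$ and $E$, so $|v_n|\to|\hat v|$ in $\widehat V$ and $|\iota(v_n)|\to|\widehat\iota(\hat v)|$ in $E$. Combining the Riesz homomorphism property $\iota(|v_n|)=|\iota(v_n)|$ with the continuity of $\widehat\iota$, I obtain
\[
\widehat\iota(|\hat v|) \;=\; \lim_n \iota(|v_n|) \;=\; \lim_n |\iota(v_n)| \;=\; |\widehat\iota(\hat v)|.
\]

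The main obstacle is injectivity. The Riesz homomorphism property turns $\widehat\iota(\hat v)=0$ into $\widehat\iota(|\hat v|)=0$, and since $|\hat v|=0$ forces $\hat v=0$ in any Riesz space, it is enough to show that $\widehat\iota$ vanishes on no nonzero element of $\widehat V_+$. The kernel $N:=\ker\widehat\iota$ is closed by continuity, and is a Riesz ideal: if $|u|\le|w|$ with $w\in N$, then positivity of $\widehat\iota$ (a consequence of being a Riesz homomorphism) gives $|\widehat\iota(u)|=\widehat\iota(|u|)\le\widehat\iota(|w|)=|\widehat\iota(w)|=0$, so $u\in N$. Injectivity of $\iota$ moreover says $V\cap N=\{0\}$. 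The crux is to upgrade these three facts---$V$ dense in $\widehat V$, $N$ a closed ideal, and $V\cap N=\{0\}$---to the conclusion $N=\{0\}$. My plan is to argue by contradiction: a hypothetical $\hat w\in N$ with $\hat w>0$ admits an approximating sequence $(v_n)\subset V_+$ with $v_n\to\hat w$ in $\widehat V$, and a lattice-truncation argument (exploiting that $N$ is an ideal closed under meets with elements of $\widehat V_+$, together with the continuity of $\wedge$) should produce a nonzero element of $V\cap N$, contradicting injectivity of $\iota$. This final truncation step---showing that the lattice interaction between $V_+$ and the closed ideal $N$ actually forces elements of $V$ into $N$---is where I expect the real technical difficulty to lie.
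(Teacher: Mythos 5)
Your first two stages (existence/continuity of the extension and the Riesz homomorphism property via uniform continuity of $v\mapsto|v|$) are correct and coincide with the paper's argument. The gap is in the injectivity step, and you have in fact flagged it yourself: the ``lattice-truncation argument'' that is supposed to manufacture a nonzero element of $V\cap N$ from a nonzero $\widehat{w}\in N_+$ is never carried out, and as sketched it does not work. If $(v_n)\subset V_+$ approximates $\widehat{w}$, the truncations $v_n\wedge\widehat{w}$ do land in $N$ (since $N$ is an ideal), but they lie in $\widehat{V}$, not in $V$, and there is no mechanism for pushing them back into $V$; the three facts you isolate (density of $V$, $N$ a closed ideal, $V\cap N=\{0\}$) are essentially a restatement of what must be proved rather than a toolkit for proving it. Indeed, applying the lemma to the quotient map $\widehat{V}\to\widehat{V}/N$ restricted to $V$ shows that the abstract claim ``closed ideal meeting a dense Riesz subspace trivially must be trivial'' is equivalent in strength to the injectivity assertion itself, so no purely formal manipulation of these three facts can close the argument.

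The missing ingredient, which is what the paper uses, is a quantitative link between the order structure and the norm of the \emph{target} space. By a theorem of Luxemburg (\cite[Theorem 60.4]{luxemburg}), a positive $\widehat{v}\in\widehat{V}$ with $\widehat{\iota}(\widehat{v})=0$ admits an \emph{increasing} approximating sequence $(v_n)\subset V_+$. Positivity of $\iota$ and monotonicity of $||\cdot||_E$ then force $n\mapsto||\iota(v_n)||_E$ to be non-decreasing, while continuity of $\widehat{\iota}$ forces it to tend to $0$; hence $\iota(v_n)=0$ for all $n$, so $v_n=0$ by injectivity of $\iota$, and $\widehat{v}=\lim_n v_n=0$. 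Note that monotonicity of the norms is genuinely needed here --- the example immediately following the lemma in the paper exhibits a continuous injective linear map whose extension to the completion fails to be injective once the lattice compatibility of the norm is dropped --- whereas your proposed truncation route never invokes it. To complete your proof you would need to replace the truncation heuristic by the increasing-approximation argument (or an equivalent use of the monotone structure).
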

\begin{proof}
We start showing that $\widehat{\iota}$ is a Riesz homomorphism, i.e., $\widehat{\iota}\,(|\widehat{v}|)=|\,\widehat{\iota}\,(\widehat{v})|$ for every $\widehat{v}\in \widehat{V}$. Let $\widehat{v}\in \widehat{V}$ and let $(v_n)_n$ be a sequence in $V$ which converges to $\widehat{v}$ in $\widehat{||\cdot||}_V$-norm. Then the sequence $(\widehat{\iota}\,(v_n))_n$ converges to $\widehat{\iota}\,(\widehat{v})$ in $||\cdot||_E$-norm. Therefore,
    \begin{align*}
        \widehat{\iota}\,(|\widehat{v}|) & = \widehat{\iota}( | \lim_n v_n | )
                                         = \widehat{\iota}( \lim_n |v_n| )                                  
                                       = \lim_n\widehat{\iota}\,(|v_n|)
                                         = \lim_n \iota(|v_n|)                                               \\
                                       & = \lim_n |\iota(v_n)|  
                                         = | \lim_n \iota(v_n) |                                             
                                        = | \lim_n\widehat{\iota}\,(v_n) |
                                         = | \,\widehat{\iota}( \lim_n v_n ) |
                                         = |\, \widehat{\iota}\,(\widehat{v}) |.
    \end{align*}
It just remains to prove that $\widehat{\iota}$ is injective. Suppose it is not the case. Then there is a non-zero vector $\widehat{v}\in \widehat{V}$ such that $\widehat{\iota}\,(\widehat{v}) = 0$. We can suppose that $\widehat{v}$ is positive as $\widehat{\iota}$ is a Riesz homomorphism. By Theorem $60.4$ of \cite{luxemburg}, there is a positive increasing sequence $(v_n)_n$ in $V$ which converges to $\widehat{v}$ in $\widehat{||\cdot||}_V$-norm. Therefore, $(\widehat{\iota}(v_n))_n$ converges to $\widehat{\iota}(\widehat{v}) = 0$ in $||\cdot||_E$-norm. But this means that $\lim_n ||\widehat{\iota}(v_n)||_E = \lim_n ||\iota(v_n)||_E = 0$. Moreover, $||\iota(v_n)||_E \leq ||\iota(v_m)||_E$ for every $m \geq n$, since $(v_n)_n$ is increasing, $\iota$ is a positive map and $||\cdot||_E$ is a monotone norm. Passing to a subsequence, we can suppose that $||\iota(v_n)||_E \leq \frac{1}{n}$ for every $n\in \N$. Fix now $n_0 \in \N$ such that $v_{n_0}>0$. Then
	\begin{align*}
		||\iota(v_{n_0})||_E \leq ||\iota(v_n)||_E \leq \frac{1}{n}
	\end{align*}
    for every $n\geq n_0$. This implies that $	||\iota(v_{n_0})||_E = 0$. Thus, $\iota(v_{n_0}) = 0$ and $v_{n_0}= 0$ as $\iota$ is injective. But this is a contradiction.  
\end{proof}

This last lemma is no longer valid if we drop the monotonicity of the norm $||\cdot||_V$.

\begin{example}
Let $(E, ||\cdot||)$ be an infinite-dimensional Banach space and let $T$ be a discontinuous linear functional on it. Define the map
    \begin{align*}
        ||v||_T = || v || + |T(v)| \quad \text{ for }v\in E. 
    \end{align*}
Then $||\cdot||_T$ is a norm on $E$, which is strictly finer than the norm $||\cdot||$. Therefore, the identity map
    \begin{align*}
        \Id : (E, ||\cdot||') \longrightarrow (E, ||\cdot||)
    \end{align*}
is an injective continuous linear operator. We claim that the extension
    \begin{align*}
        \widehat{\Id}: (\widehat{E}, \widehat{||\cdot||}_T) \longrightarrow (E, ||\cdot||)
    \end{align*}
is not injective. Indeed, suppose that $\widehat{\Id}$ is injective. Then by the Closed Graph Theorem (\cite[Theorem 5.20]{infinitedim}) the inverse of  $\widehat{\Id}$ has to be continuous. But this is not possible. 
\end{example}

\begin{cor}\label{corollary completion of (E,d) subspace of E}
Let $M$ be a positive uniformly bounded subset of $E$. Then the completion $\widehat{E_\oM}$ of $E_\oM$ with respect to the $p_\oM$-norm can be realized as a Riesz subspace of $E$.
\end{cor}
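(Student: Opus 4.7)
The plan is to invoke Lemma \ref{lemma realizing completion of subspace} with $V = E_{\overline{M}}$ equipped with the norm $p_{\overline{M}}$ and with $\iota \colon E_{\overline{M}} \hookrightarrow E$ the set-theoretic inclusion. The whole corollary then reduces to verifying the three hypotheses of that lemma: $(E, \|\cdot\|)$ is a Banach lattice (given), $(E_{\overline{M}}, p_{\overline{M}})$ is a normed Riesz space, and $\iota$ is an injective continuous Riesz homomorphism.

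The only non-trivial step is showing $(E_{\overline{M}}, p_{\overline{M}})$ is a normed Riesz space, and the argument is the same as in Proposition \ref{proposition p_M is a norm} with series replacing finite sums. Concretely, set $C = \sup\{\|m\| : m \in M\} < \infty$. For $v \in E_{\overline{M}}$ and $\varepsilon > 0$, pick a summable $(t_j)_j \subset \R_+$ and $(m_j)_j \subset M$ with $|v| \leq \sum_{j=1}^\infty t_j m_j$ and $\sum_{j=1}^\infty t_j \leq p_{\overline{M}}(v) + \varepsilon/C$. Monotonicity of $\|\cdot\|$ together with the triangle inequality for the absolutely convergent series gives
\begin{align*}
    \|v\| \;\leq\; \Big\| \sum_{j=1}^\infty t_j m_j \Big\| \;\leq\; \sum_{j=1}^\infty t_j \|m_j\| \;\leq\; C\, p_{\overline{M}}(v) + \varepsilon,
\end{align*}
so $\|v\| \leq C\, p_{\overline{M}}(v)$. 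Sub-additivity, positive homogeneity, and monotonicity of $p_{\overline{M}}$ are immediate from the definition, and the estimate just established upgrades the seminorm to a norm: $p_{\overline{M}}(v) = 0$ forces $\|v\| = 0$, hence $v = 0$. Since the proposition following the definition of $E_M$ applies to the positive set $\overline{M}$, the space $E_{\overline{M}}$ is an ideal, so it is a Riesz subspace of $E$ and $(E_{\overline{M}}, p_{\overline{M}})$ is a normed Riesz space.

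With this in hand, $\iota$ is manifestly injective, the estimate $\|v\| \leq C\, p_{\overline{M}}(v)$ gives continuity, and $\iota$ is a Riesz homomorphism because absolute values and lattice operations in an ideal coincide with those of the ambient Riesz space. Lemma \ref{lemma realizing completion of subspace} then produces an injective continuous Riesz homomorphism $\widehat{\iota} \colon (\widehat{E_{\overline{M}}}, \widehat{p_{\overline{M}}}) \to (E, \|\cdot\|)$, and $\widehat{\iota}(\widehat{E_{\overline{M}}}) \subset E$ is the promised realization of the completion as a Riesz subspace of $E$. There is no real obstacle; the corollary is a direct application of the preceding lemma once one adapts the proof of Proposition \ref{proposition p_M is a norm} from finite sums to summable series, which is where I would concentrate the writing.
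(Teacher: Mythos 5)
Your proposal is correct and follows essentially the same route as the paper: both apply Lemma \ref{lemma realizing completion of subspace} to the natural inclusion $\iota\colon (E_{\oM},p_{\oM})\hookrightarrow (E,\|\cdot\|)$. The paper simply leaves the verification of the lemma's hypotheses implicit, whereas you spell out the estimate $\|v\|\leq C\,p_{\oM}(v)$ and the fact that $E_{\oM}$ is an ideal; this is a faithful (and welcome) filling-in of the omitted details rather than a different argument.
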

\begin{proof}
Consider the natural inclusion
    \begin{align*}
        \iota : E_\oM \longrightarrow E, \quad v \longmapsto \iota(v) = v.
    \end{align*}
Then the extension
    \begin{align*}
        \widehat{\iota} : \widehat{E_\oM} \longrightarrow E, \quad v \longmapsto \widehat{\iota}(v)
    \end{align*}
is an injective Riesz homomorphism by Lemma \ref{lemma realizing completion of subspace}. Therefore, $\widehat{E_\oM}$ can be realize as a Riesz subspace of $E$.
\end{proof}

\begin{rem}\label{remark same limit infinite series}
Let $(E, ||\cdot||)$ be a Banach lattice and take a positive uniformly bounded subset $M$ of $E$. Then, for every absolutely convergent sequence $(t_j)_j \subset \R_+$ and every sequence $(m_j)_j\subset M$, the sequence $u_n = \sum_{j=1}^n t_jm_j$ converges for the $\widehat{p}_\oM$-norm and the $||\cdot||$-norm to the same limit thanks to Corollary \ref{corollary completion of (E,d) subspace of E}.
\end{rem}

We are finally ready to show the completeness of the space $(E_\oM, p_\oM)$.

\begin{thm}\label{theorem (E,d)_infty is a Banach lattice}
The pair $\left( E_\oM, p_\oM  \right)$ is a Banach lattice for every positive uniformly bounded subset $M$ of $E$.
\end{thm}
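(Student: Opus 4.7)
The plan is to show that every $p_{\overline{M}}$-Cauchy sequence in $E_{\overline{M}}$ converges. That $E_{\overline{M}}$ is an ideal of $E$ and that $p_{\overline{M}}$ is a monotone seminorm follows from the definitions exactly as for $p_M$; moreover, the argument of Proposition \ref{proposition p_M is a norm} adapts directly (using absolute convergence of the defining infinite sums in the Banach space $E$) to yield the inequality $\|v\|\leq C\, p_{\overline{M}}(v)$ on $E_{\overline{M}}$, with $C=\sup_{m\in M}\|m\|$. In particular $p_{\overline{M}}$ is a norm.

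Let $(v_n)\subset E_{\overline{M}}$ be $p_{\overline{M}}$-Cauchy. After passing to a subsequence, I may assume $p_{\overline{M}}(v_{n+1}-v_n)<2^{-n}$, so by the above inequality $(v_n)$ is also $\|\cdot\|$-Cauchy and hence admits a $\|\cdot\|$-limit $v\in E$. For each $n$, pick a positive summable $(t^{(n)}_j)_j\subset\R_+$ and $(m^{(n)}_j)_j\subset M$ with $|v_{n+1}-v_n|\leq\sum_j t^{(n)}_j m^{(n)}_j$ in $E$ and $\sum_j t^{(n)}_j<2^{1-n}$. Expressing $v-v_k$ as the $\|\cdot\|$-limit of the telescoping sum $\sum_{n=k}^{k+N-1}(v_{n+1}-v_n)$, applying the continuity of $|\cdot|$ and the triangle inequality $|\sum|\leq\sum|\cdot|$, and invoking Lemma \ref{lemma dominazione limite serie} twice, yields
\[
|v-v_k|\;\leq\;\sum_{n=k}^{\infty}|v_{n+1}-v_n|\;\leq\;\sum_{n=k}^{\infty}\sum_{j=1}^{\infty}t^{(n)}_j m^{(n)}_j\quad\text{in }E.
\]

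The final step is to recognize the rightmost quantity as a legitimate element of $\overline{M}$. The double family $(t^{(n)}_j m^{(n)}_j)_{n\geq k,\,j}$ is absolutely summable in $E$, since $\sum_{n\geq k}\sum_j t^{(n)}_j\|m^{(n)}_j\|\leq C\cdot 2^{2-k}$; consequently any bijective enumeration as a single series $\sum_i s^{(k)}_i m'^{(k)}_i$, with $(s^{(k)}_i)_i\subset\R_+$ summable ($\sum_i s^{(k)}_i\leq 2^{2-k}$) and $(m'^{(k)}_i)_i\subset M$, converges in $E$ to the same element. Hence the right-hand side lies in $\overline{M}$, and therefore $v-v_k\in E_{\overline{M}}$ with $p_{\overline{M}}(v-v_k)\leq 2^{2-k}\to 0$. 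In particular $v\in E_{\overline{M}}$, and the original Cauchy sequence converges to $v$ in $p_{\overline{M}}$-norm, as a Cauchy sequence with a convergent subsequence converges.

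The principal difficulty is handling two notions of convergence simultaneously: virtually every estimate must be carried out in $(E,\|\cdot\|)$ (using the closedness of the positive cone, continuity of $|\cdot|$, and Lemma \ref{lemma dominazione limite serie}), while the target conclusion lives in $(E_{\overline{M}},p_{\overline{M}})$. Remark \ref{remark same limit infinite series}, resting on Corollary \ref{corollary completion of (E,d) subspace of E}, is what allows us to identify these limits unambiguously, and the diagonal re-enumeration of the absolutely summable double series is the technical crux of the argument.
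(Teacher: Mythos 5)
Your argument is correct, and its core is the same as the paper's: dominate the increments by elements of $C_\oM$, pass to the limit in $(E,\|\cdot\|)$ using the closedness of the positive cone (Lemma \ref{lemma dominazione limite serie}), and recognize the resulting double series, after re-enumeration, as a single element of $\oM$ bounding the tail. The genuine difference is in how you obtain the candidate limit and identify it: the paper verifies the absolutely-convergent-series criterion and locates the limit $\ell$ inside the abstract completion $\widehat{E_\oM}$, which it must first realize as a Riesz subspace of $E$ via Lemma \ref{lemma realizing completion of subspace} and Corollary \ref{corollary completion of (E,d) subspace of E}; you instead work with a Cauchy sequence and use the elementary comparison $\|v\|\leq C\,p_\oM(v)$ to see that it is $\|\cdot\|$-Cauchy, so the candidate limit $v$ already lives in $E$ with no completion machinery needed. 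This makes your proof more self-contained --- in fact your closing appeal to Remark \ref{remark same limit infinite series} and Corollary \ref{corollary completion of (E,d) subspace of E} is superfluous, since you prove $p_\oM(v-v_k)\to 0$ directly for the $\|\cdot\|$-limit $v$ and never need to reconcile two a priori different limits. What the paper's route buys in exchange is the corollary itself (the completion of $E_\oM$ sits inside $E$ as a Riesz subspace), which it reuses later to prove Theorem \ref{theorem completion of (E,d) is (E,d)_infty}. Two small points to make explicit if you write this up: the estimate $\|v\|\leq C\,p_\oM(v)$ requires continuity and monotonicity of $\|\cdot\|$ applied to the $\|\cdot\|$-convergent series $\sum_j t_j m_j$ (not just the finite-sum argument of Proposition \ref{proposition p_M is a norm}), and the identification of the iterated double sum with an arbitrary bijective enumeration rests on unconditional convergence of absolutely convergent series in a Banach space --- both routine, but they are the places where the infinite sums genuinely enter.
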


\begin{proof}
We have to show that, for every sequence $(x_k)_k$ in $E_\oM$ such that $\lim_n \sum_k^n p_\oM(x_k) < \infty$, the limit $\lim_n \sum_k^n x_k$ with respect to the $p_\oM$-norm exists in $E_\oM$.

First of all, note that $\sum_{k=1}^n \widehat{p}_\oM(x_k) = \sum_{k=1}^n p_\oM(x_k) $ for every $n\in \N$. Thus, $\lim_n \sum_k^n \widehat{p}_\oM(x_k) < \infty $ which means that the limit $\ell = \lim_n \sum_k^n x_k$ exists in $\widehat{E_\oM}$ for the $\widehat{p}_\oM$-norm, since $(\widehat{E_\oM}, \widehat{p}_\oM)$ is a Banach space.

We know that $\ell \in E$ by Corollary \ref{corollary completion of (E,d) subspace of E}. We claim that $\ell \in E_\oM$. Therefore, we have to show that there are sequences $(t_j)_j \subset \R_+$ and $(m_j)_j \subset M$ such that
    \begin{align*}
        |\ell| \leq \sum_{j=1}^\infty t_jm_j = \lim_n \sum_{j=1}^n t_jm_j
    \end{align*}
where the limit is taken with respect to the $||\cdot||$-norm. For every $k\in \N$, there are $(t_j^{(k)})_j \subset \R_+$ and $(m_j^{(k)})\subset M$ such that 
    \begin{align*}
        |x_k| \leq \sum_{j=1}^\infty t_j^{(k)} m_j^{(k)} \qquad \text{ and } \qquad \sum_{j=1}^\infty t_j^{(k)} \leq p_\oM(x_k)+2^{-k}.
    \end{align*}
Set $y_k = \sum_j^\infty t_j^{(k)} m_j^{(k)}x_k $. Then $|x_k| \leq y_k$ for every $k\in \N$ and
    \begin{align*}
        \sum_{k=1}^N \widehat{p}_\oM(y_k) \leq \sum_{k=1}^N \sum_{j=1}^\infty t_j^{(k)}\leq \sum_{k=1}^N p_\oM(x_k)+2^{-k}
    \end{align*}
for every $N \in \N$. Taking the limit on both sides of this last inequality, we have that
    \begin{align*}
        \lim_N \sum_{k=1}^N \widehat{p}_\oM(y_k) < \infty.
    \end{align*}
Therefore by Lemma \ref{lemma dominazione limite serie},
    \begin{align*}
        |\ell| \leq \sum_{k=1}^\infty |x_k| \leq \sum_{k=1}^\infty \sum_{j=1}^\infty t_j^{(k)} m_j^{(k)},
    \end{align*}
where all the limits are taken with respect to the $\widehat{p}_\oM$-norm.
The last double sum converges also in $||\cdot||$-norm to the same limit as explained in Remark \ref{remark same limit infinite series}. This implies that $\ell\in E_\oM$. Now, it is easy to show that the sequence $(v_n)_n $ given by $v_n = \sum_k^n x_k$ converges to $\ell$ in $p_\oM$-norm. Indeed, for every $\epsilon > 0$ there is $n_0\in \N$ such that $\widehat{p}_\oM\left(\sum_k^n x_k - \ell\right) < \epsilon$ for every $n>n_0$. Thus,
    \begin{align*}
        p_\oM \left( v_n - \ell \right)= p_\oM \left(\, \sum_{k=1}^n x_k -\ell \right) = \widehat{p}_\oM\left(\, \sum_{k=1}^n x_k - \ell \right) < \epsilon
    \end{align*}
for every $n > n_0$.
\end{proof}

The next goal is to understand the relation between the spaces $E_M$ and $E_{\oM}$.

\begin{prop}\label{density of (E,d) in (E,d)_infity}
Let $M$ be a positive uniformly bounded subset of $E$. Then $E_M$ is dense in $E_\oM$ with respect to the $p_\oM$-norm.
\end{prop}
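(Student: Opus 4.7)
The plan is to produce, for each $v \in E_\oM$, an explicit approximating sequence in $E_M$ built from the truncations of the series that witnesses $v \in E_\oM$. By definition of $E_\oM$, fix a summable sequence $(t_j)_{j=1}^\infty \subset \R_+$ and $(m_j)_{j=1}^\infty \subset M$ with $|v|\leq u$, where $u := \sum_{j=1}^\infty t_j m_j$. Set $u_n := \sum_{j=1}^n t_j m_j$; this is a positive element of $C_M$, hence lies in $E_M$.

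Using the Riesz space structure of $E$ (it is a Banach lattice by assumption), decompose $v = v^+ - v^-$ and define
\[
    v_n := v^+ \wedge u_n \; - \; v^- \wedge u_n.
\]
Since $0 \leq v^+ \wedge u_n \leq u_n \in C_M$, and likewise for $v^- \wedge u_n$, both summands lie in $E_M$; as $E_M$ is a vector subspace (in fact an ideal, $E$ being a Riesz space), so does $v_n$.

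To estimate $p_\oM(v - v_n)$, I would invoke the identity $x - x\wedge y = (x-y)^+$, valid in any Riesz space, to obtain $v - v_n = (v^+ - u_n)^+ - (v^- - u_n)^+$. Combined with the triangle inequality for $|\cdot|$ and the bounds $v^\pm \leq |v| \leq u$, this gives
\[
    |v - v_n| \;\leq\; (v^+ - u_n)^+ + (v^- - u_n)^+ \;\leq\; 2(u - u_n) \;=\; \sum_{j=n+1}^\infty 2 t_j m_j.
\]
Feeding the tail $(2t_{n+1}, 2t_{n+2}, \dots)$ paired with $(m_{n+1}, m_{n+2}, \dots)$ into the definition of $p_\oM$ then yields $p_\oM(v - v_n) \leq 2\sum_{j=n+1}^\infty t_j$, which tends to $0$ as $n \to \infty$ by summability of $(t_j)_j$, proving that $v_n \to v$ in $p_\oM$-norm.

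I do not foresee any real obstacle. The key points are that the partial sums $u_n$ live in $C_M$ and that the Banach lattice structure of $E$ allows us to truncate $v^\pm$ against $u_n$ without leaving $E_M$. The only mildly delicate step is the lattice bound on $|v - v_n|$, which however is a direct consequence of standard Riesz space identities and of $v^\pm \leq u$.
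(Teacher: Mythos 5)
Your proof is correct, but it takes a genuinely different route from the paper's. The paper argues by contradiction via duality: it first establishes a lattice version of the Hahn--Banach non-density criterion (Lemma \ref{Non-density of ideals}: an ideal in a Banach lattice fails to be dense if and only if a non-zero \emph{positive} functional vanishes on it), and then observes that a positive functional $\psi$ vanishing on $E_M$ would have to satisfy $0 < \psi(v) \leq \psi\big(\sum_j t_j m_j\big) = \sum_j t_j \psi(m_j) = 0$ for some positive $v \in E_\oM$, a contradiction. You instead give a direct, constructive argument: truncating $v^{\pm}$ against the partial sums $u_n \in C_M$ and verifying the lattice estimate $|v - v_n| \leq 2(u - u_n)$, which yields the quantitative bound $p_\oM(v - v_n) \leq 2\sum_{j > n} t_j$. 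Your steps all check out: the identity $x - x \wedge y = (x - y)^+$, the monotonicity of $z \mapsto z^+$, the positivity of the tail $u - u_n$ (closedness of the positive cone of the Banach lattice $E$), and the fact that $0 \leq w \leq u_n \in C_M$ places $w$ in $E_M$ (the degenerate case $u_n = 0$ being trivial). What each approach buys: yours is more elementary --- it avoids Hahn--Banach, the Riesz decomposition of continuous functionals, and the completeness of $(E_\oM, p_\oM)$ established in Theorem \ref{theorem (E,d)_infty is a Banach lattice}, on which the paper's application of Lemma \ref{Non-density of ideals} rests --- and it produces explicit approximants with a rate of convergence; the paper's argument is shorter on the page and leverages a separation lemma of independent interest, at the cost of being non-constructive.
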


To prove this last proposition, we need the following lemma, which is only a lattice version of the famous and standard result \cite[Corollary $5.81$]{infinitedim}. The latter states that a vector subspace of a locally convex vector space is not dense exactly when a non-zero continuous linear functional vanishes on it. We require an \textit{ideal} version of it.

\begin{lem}[Non-density of ideals]\label{Non-density of ideals}
Let $E$ be a Banach lattice and let $V\subset E$ be an ideal. Then $V$ is not dense in $E$ if and only if there is a non-zero positive functional on $E$ which vanishes on $V$.
\end{lem}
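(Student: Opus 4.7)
The easy direction is essentially automatic. If $\varphi$ is a non-zero positive linear functional on $E$ with $\varphi|_V = 0$, then $\varphi$ is norm-continuous (positive linear functionals on a Banach lattice are automatically continuous, since $E_+$ is generating and closed), so $\ker(\varphi)$ is a proper closed subspace of $E$ containing $V$, and consequently $\overline{V} \subset \ker(\varphi) \neq E$.

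For the converse, suppose $V$ is not dense, and let $W = \overline{V}$. The first step is to observe that $W$ is again an ideal: since the lattice operations are norm-continuous on $E$ and the positive cone is closed, the solidity condition defining ``ideal'' is preserved under taking norm closure (given $y \in W$ and $|x| \leq |y|$, approximate $y$ by $y_n \in V$ and truncate $x$ against $|y_n|$ inside the ideal $V$). Therefore the quotient $F = E/W$ is a nonzero Banach lattice with positive cone $q(E_+)$ and quotient norm, where $q \colon E \to F$ denotes the canonical projection. Pick any $x \in E \setminus W$; then $x = x^+ - x^-$, and since $W$ is an ideal (so $x^+, x^- \in W$ would imply $x \in W$), at least one of $x^+, x^-$ does not lie in $W$. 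Without loss of generality $u := q(x^+)$ is a strictly positive element of $F$, in particular $\|u\| > 0$.

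The core of the argument is to produce a positive continuous linear functional on $F$ that does not vanish at $u$; this is the step where the mere Hahn--Banach theorem does not suffice, and one must work with a lattice-adapted sublinear gauge. Define $p \colon F \to \R$ by $p(z) = \|z^+\|$. This $p$ is positively homogeneous and subadditive (the latter from $(z+w)^+ \leq z^+ + w^+$ and monotonicity of the norm), hence sublinear. On the line $\R u$ set $\varphi_0(tu) = t\,\|u\|$. For $t \geq 0$ we have $\varphi_0(tu) = t\|u\| = p(tu)$, and for $t < 0$ we have $p(tu) = 0 > \varphi_0(tu)$, so $\varphi_0 \leq p$ on $\R u$. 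Hahn--Banach extends $\varphi_0$ to a linear $\varphi \colon F \to \R$ with $\varphi \leq p$. This $\varphi$ is automatically positive (if $z \geq 0$ then $\varphi(-z) \leq p(-z) = \|(-z)^+\| = 0$, so $\varphi(z) \geq 0$) and non-zero since $\varphi(u) = \|u\| > 0$.

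Finally, pulling back gives $\psi := \varphi \circ q \colon E \to \R$, which is a positive linear functional (as the composition of two positive maps), vanishes on $W \supset V$, and satisfies $\psi(x^+) = \varphi(u) > 0$. The main obstacle, as mentioned, is exactly the positivity of the separating functional: one cannot simply apply the scalar Hahn--Banach separation theorem, but must tailor the dominating sublinear functional to the lattice structure of $F$.
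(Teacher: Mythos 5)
Your proof is correct, but it follows a genuinely different route from the one in the paper. For the hard direction the paper first applies the classical separation corollary to produce a non-zero \emph{continuous} functional $\psi$ vanishing on $V$, and then shows that the positive functional $|\psi|=\psi^++\psi^-$ still vanishes on $V$: by the Riesz--Kantorovich formula $\psi^+(v)=\sup\{\psi(w): 0\leq w\leq v\}$, and since $V$ is an ideal the supremum ranges only over elements of $V$, so it is $0$. You instead manufacture the positive functional from scratch: you pass to the quotient Banach lattice $F=E/\overline{V}$ and run Hahn--Banach against the lattice-adapted sublinear gauge $p(z)=\|z^+\|$, which forces positivity of the extension. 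The paper's route is shorter because it delegates the work to the order structure of the dual $E'$; yours is more self-contained (it never uses that $E'$ is a Riesz space, nor the Riesz--Kantorovich formula) but leans on two standard structural facts that you only sketch, namely that the norm closure of an ideal is an ideal (your truncation $x_n=(x\vee(-|y_n|))\wedge|y_n|$ does work: the Birkhoff inequalities give $\|x_n-x\|\leq 2\,\|y_n-y\|$) and that the quotient of a Banach lattice by a closed ideal is again a Banach lattice. The individual steps all check out: $p$ is sublinear since $(z+w)^+\leq z^++w^+$ and the quotient norm is monotone, $\varphi_0\leq p$ on $\R u$ because $u\geq 0$, and $\varphi\leq p$ yields positivity from $p(-z)=0$ for $z\geq 0$. (A cosmetic remark: to conclude that one of $x^{+},x^{-}$ lies outside $W$ you only need $W$ to be a subspace, not an ideal.)
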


\begin{proof}
Suppose a non-zero positive functional $\psi$ vanishing on $V$. In particular, $\psi$ is continuous since positive operators on Banach lattice are automatically continuous (\cite[Theorem 9.6]{infinitedim}). Therefore, we can apply \cite[Corollary $5.81$]{infinitedim} to conclude that $V$ is not dense in $E$.

Suppose now that $V$ is not dense in $E$. Then there is a non-zero continuous functional $\psi$ on $E$ which vanishes on $V$ by \cite[Corollary $5.81$]{infinitedim}. We claim that $|\psi|$ is the functional we are searching. Clearly, $|\psi|$ is a non-zero positive functional. We have only to show that it is zero on $V$. Let $|\psi| = \psi_+ + \psi_-$, where $\psi_+$ is the positive part of $\psi$ and $\psi_-$ is the negative part of $\psi$. We want to prove that the equality $\psi_+(v)=\psi_-(v)=0$ holds for every positive vector $v\in V$. Actually, it suffices to prove that $\psi_+(v)=0.$ By \cite[Theorem 8.24]{infinitedim},
    \begin{align*}
        \psi_+(v)     = \sup \left\{ \psi(w): w\in E \text{ and }0\leq w\leq v \right\}
                      = \sup \left\{ \psi(w): w\in V \text{ and }0\leq w\leq v \right\},
    \end{align*}
where the second equality is possible thanks to the fact that $V$ is an ideal in $E$. As $\psi_+$ is continuous, we can conclude that $\psi_+$ vanishes on $V$.
\end{proof}

This yields:

\begin{proof}[Proof of Proposition \ref{density of (E,d) in (E,d)_infity}]
In order to find a contradiction, suppose that $E_M$ is not dense in $E_\oM$ for the $p_\oM$-norm. By Lemma \ref{Non-density of ideals}, there exists a non-zero positive functional $\psi$ which vanishes on $E_M$. As $\psi$ is non-zero, there is a non-zero positive vector $v\in E_\oM$ such that $\psi(v)> 0$. Now let $(t_j)_{j=1}^\infty \subset \R_+$ be a summable sequence and $(m_j)_{j=1}^\infty \subset M$ such that $v\leq \sum_{j=1}^\infty t_jm_j$, and compute that
    \begin{align*}
        0 < \psi(v) \leq \psi \left(\, \sum_{j=1}^\infty t_jm_j \right) = \sum_{j=1}^\infty t_j\psi(m_j) = 0
    \end{align*}
where the second-to-last equality is possible thanks to \cite[Proposition 1.3.7 (d)]{anintrotobanach}. 
But this is a contradiction. Thus, $E_M$ is indeed dense in $E_\oM$ with respect to the $p_\oM$-norm.
\end{proof}

The following theorem clarifies once and for all the relationship between $p_M$ and $p_\oM$. As before, write $\widehat{E_M}$ for the completion of $E_M$ with respect to the uniformity given by the $p_M$-norm and $\widehat{p}_M$ for the completed norm.

\begin{thm}\label{theorem completion of (E,d) is (E,d)_infty}
Let $M$ be a positive uniformly bounded subset of $E$. Then
    \begin{align*}
        (\widehat{E_M},\widehat{p}_M ) = \left( E_\oM, p_\oM \right).
    \end{align*}
\end{thm}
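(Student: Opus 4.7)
The plan is to invoke Lemma \ref{lemma realizing completion of subspace} to realize the completion $\widehat{E_M}$ as a Riesz subspace of $E$ via the continuous injective Riesz homomorphism $\widehat{\iota}$ obtained by extending the natural inclusion $\iota \colon (E_M, p_M) \hookrightarrow (E, ||\cdot||)$; continuity of $\iota$ and monotonicity of $p_M$ both come from Proposition \ref{proposition p_M is a norm}. After this identification the statement reduces to proving two things: that $\widehat{\iota}(\widehat{E_M})$ coincides with $E_\oM$ as subsets of $E$, and that $\widehat{p}_M = p_\oM$ on this common set.

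For the inclusion $\widehat{\iota}(\widehat{E_M}) \subseteq E_\oM$ together with $p_\oM \leq \widehat{p}_M$, I would pick a $p_M$-Cauchy representative $(v_n) \subset E_M$ of an arbitrary $\widehat{v} \in \widehat{E_M}$. Since $p_\oM \leq p_M$ on $E_M$, the sequence is also Cauchy in the Banach lattice $(E_\oM, p_\oM)$ provided by Theorem \ref{theorem (E,d)_infty is a Banach lattice}, hence converges there; Lemma \ref{lemma different norms same limit} forces the $p_\oM$-limit to coincide with $\widehat{\iota}(\widehat{v})$, which therefore lies in $E_\oM$. Passing to the limit in $p_\oM(v_n) \leq p_M(v_n)$ then yields the required norm inequality.

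The harder half is $E_\oM \subseteq \widehat{\iota}(\widehat{E_M})$ together with $\widehat{p}_M \leq p_\oM$: given $v \in E_\oM$ and $\epsilon > 0$, I would fabricate a $p_M$-Cauchy approximating sequence in $E_M$. Choose a summable $(t_j)_j \subset \R_+$ and $(m_j)_j \subset M$ with $|v| \leq u^\ast := \sum_j t_j m_j$ and $\sum_j t_j \leq p_\oM(v) + \epsilon$, set $u_n := \sum_{j=1}^n t_j m_j \in E_M$, and define the two-sided truncation $w_n := (v \vee (-u_n)) \wedge u_n$. Because $|w_n| \leq u_n$ and $E_M$ is an ideal of $E$, each $w_n$ lies in $E_M$ with $p_M(w_n) \leq \sum_{j=1}^n t_j$. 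The standard Riesz-space estimates $|a \vee b - a \vee c| \leq |b - c|$ and $|a \wedge b - a \wedge c| \leq |b - c|$ combine to give $|w_{n+1} - w_n| \leq 2(u_{n+1} - u_n) = 2 t_{n+1} m_{n+1}$, so $(w_n)$ is $p_M$-Cauchy and its limit $\widehat{w} \in \widehat{E_M}$ satisfies $\widehat{p}_M(\widehat{w}) \leq \sum_j t_j \leq p_\oM(v) + \epsilon$. Continuity of $\vee$ and $\wedge$ in $(E, ||\cdot||)$ together with $|v| \leq u^\ast$ shows that $w_n \to v$ in $||\cdot||$, so $\widehat{\iota}(\widehat{w}) = v$ as required.

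The main obstacle is precisely the construction of the sequence $(w_n)$: one must truncate $v$ inside $E_M$ while controlling the $p_M$-increments by the tail $\sum_{j > n} t_j$ and while guaranteeing that the $||\cdot||$-limit is $v$ itself rather than merely $|v|$. A naive one-sided cut like $v \wedge u_n$ only handles positive $v$, which is why the symmetric expression $(v \vee (-u_n)) \wedge u_n$ and the companion lattice inequalities are unavoidable; once that estimate is in place, everything else follows from the abstract completion machinery and Theorem \ref{theorem (E,d)_infty is a Banach lattice}.
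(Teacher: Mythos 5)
Your proof is correct, and its second half takes a genuinely different route from the paper's. Both arguments share the same skeleton: realize $\widehat{E_M}$ inside the ambient Banach lattice via Lemma \ref{lemma realizing completion of subspace} applied to the natural inclusion, then establish the two set inclusions together with the two norm inequalities; your first half (giving $\widehat{E_M}\subseteq E_\oM$ and $p_\oM\leq \widehat{p}_M$) matches the paper in substance. The divergence is in the surjectivity step. The paper first shows that each $v_\infty=\sum_j t_jm_j\in\oM$ is reached by its partial sums, then invokes the density of $E_M$ in $(E_\oM,p_\oM)$ (Proposition \ref{density of (E,d) in (E,d)_infity}, whose proof rests on the non-density-of-ideals Lemma \ref{Non-density of ideals} and positive functionals) to get a $p_\oM$-approximating sequence for a general $v\in E_\oM$, and finally upgrades that sequence to a $p_M$-Cauchy one using positivity of $\widehat{\iota}^{-1}$; the equality $\widehat{p}_M=p_\oM$ is then a separate last step. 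You instead build the approximating sequence explicitly by the two-sided truncation $w_n=(v\vee(-u_n))\wedge u_n$ against the partial sums $u_n$ of a dominating series, controlling the increments by the standard lattice inequalities to get $p_M(w_{n+1}-w_n)\leq 2t_{n+1}$. This buys you two things: the duality/density machinery (Lemma \ref{Non-density of ideals} and Proposition \ref{density of (E,d) in (E,d)_infity}) is bypassed entirely, and the bound $p_M(w_n)\leq\sum_{j=1}^n t_j$ delivers $\widehat{p}_M(v)\leq p_\oM(v)+\epsilon$ in the same stroke (injectivity of $\widehat{\iota}$ identifies the limits $\widehat{w}$ obtained for different $\epsilon$, so letting $\epsilon\to 0$ is legitimate). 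The only points worth writing out in full are the inequality $-u_n\leq w_n\leq u_n$, which uses $u_n\geq 0$, and the identification $(v\vee(-u^\ast))\wedge u^\ast=v$ from $|v|\leq u^\ast$, which justifies $w_n\to v$ in $\|\cdot\|$; both are routine.
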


\begin{proof}
Consider the natural inclusion
    \begin{align*}
        \iota : (E_M, p_M) \longrightarrow (E_\oM, p_\oM) \quad v \longmapsto \iota(v) = v.
    \end{align*}
Note that $\iota$ is uniformly continuous as $p_\oM \leq p_M$, and that it is an injective Riesz homomorphism. Therefore by Lemma \ref{lemma realizing completion of subspace}, the extension
    \begin{align*}
        \widehat{\iota}:(\widehat{E_M}, \widehat{p_M}) \longrightarrow (E_\oM, p_\oM) \quad v\longmapsto \widehat{\iota}(v) = v
    \end{align*}
is an injective Riesz homomorphism.

Now, we want to show that $\widehat{\iota}$ is surjective. To this aim, we start proving that for every element $v_\infty \in E_\oM$ of the form $v_\infty = \sum_{j=1}^\infty t_jm_j$, where $(t_j)_j\subset \R_+$ is an absolutely summable sequence and $(m_j)_j\subset M$, there is $\widehat{v} \in \widehat{E_M}$ such that $\widehat{\iota}\,(\widehat{v}) = v_\infty$. So, let $v_\infty$ such a vector. Then, for every $n\in \N$, define the vector $S_n(v_\infty) = \sum_{j=1}^n t_j m_j \in E_M$. We claim that the sequence $(S_n(v_\infty))_n$ converges to $v_\infty$ in $p_\oM$-norm. In fact, for every $\epsilon > 0$ there is $n_0\in \N$ such that $\sum_{j=n_0}^\infty |t_j| < \epsilon$. This implies that
    \begin{align*}
        p_\oM(v_\infty - S_n(v_\infty)) \leq \sum_{j=n}^\infty |t_j| < \epsilon, 
    \end{align*}
for every $n>n_0$. But at the same time, $(S_n(v_\infty))_n$ is also a Cauchy sequence with respect to the $p_M$-norm. Indeed, let $\epsilon > 0$ and $m\in \N$, then there is $n_0\in \N$ such that $\sum_{j=n_0}^\infty |t_j| < \epsilon$, which implies that
    \begin{align*}
        p_M( S_{n+m}(v_\infty)- S_n(v_\infty))\leq \sum_{j=n_0}^{n+m} |t_j|\leq \sum_{j=n_0}^\infty |t_j| < \epsilon,
    \end{align*}
for every $n > n_0$. Consequently, there is $\widehat{v} \in \widehat{E_M}$ such that $\lim_n S_n(v_\infty) = \widehat{v}$ in $\widehat{p}_M$-norm. Hence, 
    \begin{align*}
        \widehat{\iota}\, \left(\widehat{v}\right) = \widehat{\iota}\,\left( \, \lim_n S_n(v_\infty)\right) = \lim_n \widehat{\iota}\,\left(S_n(v_\infty)\right) = \lim_n S_n\left(v_\infty\right) = v_\infty.
    \end{align*}
Moreover, $v_\infty = \widehat{v}$ by Lemma \ref{lemma different norms same limit}.

Let's now take an arbitrary $v\in E_\oM$, and we show that it lies in the image of $\widehat{\iota}$. By Proposition \ref{density of (E,d) in (E,d)_infity}, there is a sequence $(v_n)_n \subset E_M$ which converges to $v$ in $p_\oM$-norm. We claim that $(v_n)_n$ is a Cauchy sequence for the $p_M$-norm. Let $\epsilon >0$ and $m\in \N$, then there is $n_0\in \N$ such that $p_\oM (v_{n+m}-v_n) < \epsilon$ for every $n > n_0$. This means that there are $(t_j)_j\subset \R_+$ a summable sequence and $(m_j)_j \subset G$ such that $|v_{n+m}-v_n| \leq \sum_{j=1}^\infty t_jm_j$ and $\sum_{j=1}^\infty t_j < \epsilon$. Define $v_\epsilon = \sum_{j=1}^\infty t_jm_j.$ As seen before, $v_\epsilon \in \widehat{E_M}$ and $\widehat{\iota}\, (v_\epsilon) = v_\epsilon.$ Now, $\widehat{\iota}$ is a Riesz homomorphism which is injective and surjective on its image. By \cite[Theorem 9.17]{infinitedim}, $(\,\widehat{\iota}\,)^{-1}$ is positive. Therefore, $|v_{n+m}-v_n| \leq v_\epsilon$ in $\widehat{E_M}$ and, consequently,
    \begin{align*}
        p_M\left(v_{n+m}-v_n\right) = \widehat{p}_M\left(v_{n+m}-v_n \right) \leq \widehat{p}_M\left(v_\epsilon\right) \leq \sum_{j=1}^\infty t_j < \epsilon,
    \end{align*}
for every $n>n_0$. This shows that $(v_n)_n$ is a Cauchy sequence for the $p_M$-norm. Thus, there is $\widehat{w}\in \widehat{E_M}$ such that $\lim_n v_n = \widehat{w}$ in $ \widehat{p}_M$-norm. We can finally compute that
        \begin{align*}
        \widehat{\iota}\, \left(\widehat{w}\right) = \widehat{\iota}\, \left(\lim_n v_n\right) = \lim_n \widehat{\iota}\,\left(v_n\right) = \lim_n v_n = v.
    \end{align*}
As before, $v = \widehat{w}$ by Lemma \ref{lemma different norms same limit}.

Finally, we show that $ \widehat{p}_M = p_\oM$. We already know that $p_\oM \leq  \widehat{p}_M$ because of the fact that $p_\oM \leq p_M$. For the inverse inequality, take $v\in E_\oM$ and let $\epsilon > 0$ arbitrary. Then there are $(t_j)_j \subset \R_+$ a summable sequence and $(m_j)_j \subset M$ such that $|v| \leq \sum_{j=1}^\infty t_jm_j$ and $\sum_{j=1}^\infty t_j \leq p_\oM(v) + \epsilon$. Set $v_\infty = \sum_{j=1}^\infty t_jm_j$ and compute that
    \begin{align*}
         \widehat{p}_M(v) \leq  \widehat{p}_M(v_\infty) = \lim_n p_M \left( S_n (v_\infty )\right) 
         \leq \lim_n \sum_{j=1}^n t_j
         = \sum_{j=1}^\infty t_j \leq p_\oM (v) + \epsilon.
    \end{align*}
As $\epsilon$ was chosen arbitrarily, $\widehat{p}_M \leq p_\oM$.
\end{proof}

As promised, we explain the choice of the notation $\oM$. To this aim, take a Banach lattice $E$, and let $M$ be a positive uniformly bounded subset of $E$. Then a sufficient condition to guarantee that $(E_M, p_M)$ is a Banach lattice is asking that $C_M = C_{\oM} = \oM$. 

\medskip
However, note that the cone $C_{\oM}$ is not always closed for the $p_{\oM}$-norm. But the closures of $C_M$ and $C_{\oM}$ coincide, as explained in what follows.

\medskip
For a subset $A$ of a normed vector space $(E, ||\cdot||)$, write $\overline{A}^{||\cdot||}$ for the closure of $A$ in $E$ with respect to the norm $||\cdot||$.

\begin{cor}\label{corollary closure of M}
Let $M$ be a positive uniformly bounded subset of $E$. Then 
    \begin{align*}
        \overline{C_M}^{p_M} = \overline{C_M}^{p_\oM} = \overline{C_\oM}^{p_\oM}.
    \end{align*}
\end{cor}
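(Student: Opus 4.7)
The plan is to deduce the first equality from Theorem \ref{theorem completion of (E,d) is (E,d)_infty} and to handle the second equality by an explicit approximation argument via partial sums.

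For the first equality $\overline{C_M}^{p_M} = \overline{C_M}^{p_\oM}$, Theorem \ref{theorem completion of (E,d) is (E,d)_infty} identifies the completion $(\widehat{E_M}, \widehat{p}_M)$ with $(E_\oM, p_\oM)$. Under this identification, the completed norm $\widehat{p}_M$ agrees with $p_\oM$ on $E_\oM$ and restricts to $p_M$ on $E_M$. Hence taking the $p_M$-closure of $C_M$ in the completion $\widehat{E_M} \cong E_\oM$ coincides with taking the $p_\oM$-closure of $C_M$ in $E_\oM$, which gives the first equality.

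For the second equality $\overline{C_M}^{p_\oM} = \overline{C_\oM}^{p_\oM}$, the inclusion $\overline{C_M}^{p_\oM} \subseteq \overline{C_\oM}^{p_\oM}$ is immediate from $C_M \subseteq C_\oM$ and monotonicity of closure. For the reverse inclusion, it suffices to show $C_\oM \subseteq \overline{C_M}^{p_\oM}$. Given $c \in C_\oM = \oM$, the definition of $\oM$ furnishes a summable sequence $(t_j)_j \subset \R_+$ and $(m_j)_j \subset M$ with $c = \sum_{j=1}^\infty t_j m_j$. The partial sums $S_n = \sum_{j=1}^n t_j m_j$ lie in $C_M$, and straight from the definition of $p_\oM$,
    \begin{align*}
        p_\oM(c - S_n) \;\leq\; \sum_{j=n+1}^\infty t_j,
    \end{align*}
which tends to $0$ as $n \to \infty$ since $(t_j)_j$ is summable; hence $c \in \overline{C_M}^{p_\oM}$.

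Neither step poses a serious obstacle once Theorem \ref{theorem completion of (E,d) is (E,d)_infty} is in hand: the first equality is essentially a rephrasing of the identification of completions, and the second is just the observation that every element of $\oM$ is the $p_\oM$-limit of the natural truncations of its defining series.
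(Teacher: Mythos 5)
Your proof is correct and follows essentially the same route as the paper: the first equality is read off from Theorem \ref{theorem completion of (E,d) is (E,d)_infty} (you are in fact slightly more careful in noting that the $p_M$-closure must be taken inside the completion $\widehat{E_M}\cong E_\oM$ for the statement to make sense), and the second equality is obtained by the same truncation argument $p_\oM(c-S_n)\leq\sum_{j>n}t_j\to 0$ that the paper borrows from the proof of that theorem.
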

\begin{proof}
The first equality is given by Theorem \ref{theorem completion of (E,d) is (E,d)_infty}. Let's look at the second equality. On the one hand, $C_M \subset C_{\oM}$. Therefore, $\overline{C_M}^{p_\oM} \subset \overline{C_\oM}^{p_\oM}$. On the other hand, $C_{\oM} \subset \overline{C_M}^{p_\oM}$ as saw in the proof of Theorem \ref{theorem completion of (E,d) is (E,d)_infty}. In fact, every element $\sum_{j=1}^\infty t_jm_j$ of $C_{\oM}$ can be approximated by the truncated sequence $u_n = \sum_{j=1}^nt_jm_j$ with respect to the $p_M$-norm. Hence, $\overline{C_\oM}^{p_\oM} \subset \overline{C_M}^{p_\oM}$. We can conclude that $\overline{C_M}^{p_\oM} = \overline{C_\oM}^{p_\oM}$.
\end{proof}

If one is not happy about this justification, we will give a more satisfying one in Section \ref{Section coherent subsets}.

\medskip
Finally, it is possible to give a sufficient condition in terms of $M$ to ensure the completeness of $(E_M,p_M)$.

\begin{cor}
Suppose that $M$ is a positive uniformly bounded subset of a Banach lattice $E$ such that $\text{co}(M)$ is $p_M$-closed, then $(E_M, p_M)$ is a Banach lattice.
\end{cor}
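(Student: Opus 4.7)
The plan is to reduce the statement to the sufficient condition $C_M = \oM$ flagged just before the corollary: once this equality is in place we have $E_M = E_\oM$, and Theorems \ref{theorem (E,d)_infty is a Banach lattice} and \ref{theorem completion of (E,d) is (E,d)_infty} together identify $(E_M,p_M)$ with its completion $(\widehat{E_M},\widehat{p}_M)=(E_\oM,p_\oM)$, which is a Banach lattice. Since $C_M\subseteq \oM$ is automatic, the whole task is to establish the reverse inclusion, and I will do so by showing that $C_M$ is itself $p_M$-closed. Corollary \ref{corollary closure of M} then yields
\[
    C_M \;=\; \overline{C_M}^{\,p_M} \;=\; \overline{C_\oM}^{\,p_\oM} \;\supseteq\; C_\oM \;=\; \oM.
\]

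The core claim is therefore: if $\text{co}(M)$ is $p_M$-closed, then so is $C_M$. I would take $x_n \in C_M$ with $x_n \to x$ in $p_M$-norm, $x\in E_M$, $x\neq 0$ (the case $x=0$ being trivial); after discarding the finitely many indices with $x_n=0$, write $x_n = t_n c_n$ with $t_n>0$ and $c_n\in\text{co}(M)$. The pivotal step is to bound $(t_n)$. Otherwise, along a subsequence with $t_n\to\infty$, using that $(x_n)$ is $p_M$-Cauchy and hence $p_M$-bounded by some constant $K$, homogeneity forces $p_M(c_n) = p_M(x_n)/t_n \leq K/t_n \to 0$, so $c_n\to 0$ in $p_M$-norm; but $(c_n)\subseteq\text{co}(M)$ and the hypothesis would then place $0$ inside $\text{co}(M)$, violating the definitional requirement $0\notin\text{co}(M)$ underlying the construction of $C_M$.

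Having bounded $(t_n)$, I extract a subsequence with $t_n\to t^*\geq 0$. If $t^*>0$, then $c_n=x_n/t_n$ converges in $p_M$-norm to $x/t^*$, and the $p_M$-closedness of $\text{co}(M)$ places $x/t^*\in\text{co}(M)$, whence $x=t^*(x/t^*)\in C_M$. If $t^*=0$, then using $p_M(c_n)\leq 1$ (which holds for any $c_n\in\text{co}(M)$, since $c_n$ is majorized by the very convex combination of elements of $M$ expressing it) we obtain $p_M(x_n) = t_n\, p_M(c_n) \leq t_n \to 0$, forcing $x=0$ and contradicting our standing assumption. The hardest part of the argument is precisely this dichotomy for $(t_n)$: it is where the hypothesis and the structural constraint $0\notin\text{co}(M)$ jointly do their work, whereas the rest is a routine compactness-style extraction followed by a direct application of Corollary \ref{corollary closure of M}.
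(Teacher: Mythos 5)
Your proof is correct and follows essentially the same route as the paper's: reduce to the equality $C_M=\oM$ by showing $C_M$ is $p_M$-closed and invoking Corollary \ref{corollary closure of M}, then conclude via Theorems \ref{theorem (E,d)_infty is a Banach lattice} and \ref{theorem completion of (E,d) is (E,d)_infty}. The only difference is that you supply a full verification of the implication ``$\text{co}(M)$ is $p_M$-closed $\Rightarrow$ $C_M$ is $p_M$-closed'' (the dichotomy on the scalars $t_n$, using $0\notin\text{co}(M)$ and $p_M\leq 1$ on $\text{co}(M)$), a step the paper merely asserts.
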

\begin{proof}
The fact that $\text{co}(M)$ is $p_M$-closed implies that $C_M$ is also $p_M$-closed. Therefore by Corollary \ref{corollary closure of M},
    \begin{align*}
        C_{\oM} \subset \overline{C_\oM}^{p_\oM} =  \overline{C_M}^{p_M} = C_M \subset C_{\oM}.
    \end{align*}
Thus, $\oM = C_{\oM} = C_M$ and $(E_M, p_M) = (E_{\oM}, p_{\oM})$ is a Banach lattice. 
\end{proof}

\section{Positive Functionals on Majorized Spaces}\label{section positive functionals}

This section aims to understand linear functionals on majorized spaces. First,  a condition to ensure continuity of positive functionals w.r.t. a majorizing norm is presented. Then, linear functionals on normed majorized spaces are characterized using positivity, continuity, and boundedness on the majorizing subset.

\begin{prop}\label{proposition integral are p_d cont on ordered vector spaces}
Let $E$ be an ordered vector space and let $M$ be a positive subset of $E$. Suppose that $\psi$ is a positive functional defined on $E_M$ uniformly bounded on the set $M$. Then $\psi$ is continuous with respect to the semi-norm $p_M$.
\end{prop}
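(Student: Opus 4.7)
The approach is the standard one for showing continuity of a linear map on a seminormed space: establish a bound of the form $|\psi(v)| \leq K\, p_M(v)$ for a suitable constant $K$, and then derive continuity from this inequality (since $\psi$ is linear, $p_M$-boundedness implies $p_M$-continuity even when $p_M$ is only a seminorm). The natural candidate for $K$ is the uniform bound $C = \sup\{\psi(m) : m \in M\}$, which is finite by hypothesis.

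To prove the bound, I would fix $v \in E_M$ and $\epsilon > 0$, and use the definition of $p_M$ to extract scalars $t_1, \ldots, t_n \in \R_+$ and vectors $m_1, \ldots, m_n \in M$ with
\begin{align*}
\pm v \leq \sum_{j=1}^n t_j m_j \qquad \text{and} \qquad \sum_{j=1}^n t_j \leq p_M(v) + \epsilon.
\end{align*}
Applying the positive functional $\psi$ to each of the inequalities $v \leq \sum t_j m_j$ and $-v \leq \sum t_j m_j$ (which is legitimate because $\psi$ preserves the order on $E_M$) yields
\begin{align*}
\pm \psi(v) \leq \sum_{j=1}^n t_j \psi(m_j) \leq C \sum_{j=1}^n t_j \leq C\bigl(p_M(v) + \epsilon\bigr).
\end{align*}
Letting $\epsilon \to 0$ gives $|\psi(v)| \leq C\, p_M(v)$ for every $v \in E_M$.

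For the final step, I would note that if $(v_n)_n$ is a sequence in $E_M$ with $p_M(v_n - v) \to 0$, then by linearity and the estimate just obtained,
\begin{align*}
|\psi(v_n) - \psi(v)| = |\psi(v_n - v)| \leq C\, p_M(v_n - v) \longrightarrow 0,
\end{align*}
so $\psi$ is continuous with respect to $p_M$. There is no real obstacle here: the argument is a direct application of the definition of $p_M$ together with the positivity and uniform-boundedness hypotheses, and it mirrors the inequality $\|v\| \leq C\, p_M(v)$ established in Proposition \ref{proposition p_M is a norm} (with $\psi$ playing the role previously played by the monotone norm).
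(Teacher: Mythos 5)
Your proposal is correct and follows essentially the same route as the paper: the key step in both is to apply the positive functional $\psi$ to a near-optimal majorization $\pm v \leq \sum_j t_j m_j$ and use the uniform bound on $M$ to get $|\psi(v)| \leq C\,p_M(v)$ (the paper applies this estimate directly to the differences $v_\alpha - v$ along a convergent net, whereas you isolate the bound first and then deduce continuity, which is an equivalent and if anything slightly cleaner organization). No gaps.
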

\begin{proof}
Let $(v_\alpha)_\alpha$ be a net in $E_M$ which converges to some $v\in E_M$ w.r.t. the $p_M$-semi-norm. Then there is a positive net  $(\epsilon_\alpha)_\alpha$ in $\R$ converging to zero and there are $t_1^\alpha,...,t_{n_{\alpha}}^\alpha \in\R$ and $m_1^\alpha, ...,m_{n_\alpha}^\alpha \in M$ such that
    \begin{align*}
        \pm( v_\alpha - v) \leq \sum_{j=1}^{n_\alpha} t_j^\alpha m_j^\alpha \quad \text{ and } \quad \sum_{j=1}^{n_\alpha}t_j^\alpha \leq \epsilon_\alpha \quad \text{ for every } \alpha.
    \end{align*}
Set $C = \sup\left\{ \psi(m) : m\in M \right\}$. Then
    \begin{align*}
        \pm \psi(v_\alpha - v) \leq \sum_{j=1}^{n_\alpha} t_j^\alpha \psi(m_j^\alpha ) \leq M \sum_{j=1}^{n_\alpha} t_j^\alpha \leq M \epsilon_\alpha \quad \text{ for every } \alpha. 
    \end{align*}
Taking the limit of this last inequality, we have that
    \begin{align*}
        \lim_\alpha \pm \psi(v_\alpha - v) \leq C \lim_\alpha \epsilon_\alpha = 0. 
    \end{align*}
This implies that the net $(\psi(v_\alpha))_\alpha$ converges to $\psi(v)$ w.r.t. the $p_M$-semi-norm showing the continuity of $\psi$.
\end{proof}

It is possible to go deeper in the understanding of linear functionals on majorized spaces if we suppose that $p_M$ is a norm. 

\begin{thm}\label{theorem 3tochmy for functional on majorized spaces}
Let $E$ be an ordered normed vector space, and let $M$ be a positive uniformly bounded subset of $E$. Suppose that $\psi$ is a functional on $E_M$. Then
    \begin{itemize}
        \item[a)] if the functional $\psi$ is positive and uniformly bounded on the set $M$, then $\psi$ is continuous for the $p_M$-norm and
            \begin{align*}
                ||\psi||_{op} \leq \sup\left\{ \psi(m) : m\in M \right\}.
            \end{align*}
        In particular, if $\psi$ is constant on $M$, then $||\psi||_{op} = \psi(m)$ for a $m\in M$;
        
        \item[b)] if the functional $\psi$ is continuous for the $p_M$-norm and positively constant on the set $M$, then $\psi$ is positive. 
    \end{itemize}
Moreover, if $E$ is a normed Riesz space, then
    \begin{itemize}
         \item[c)] if the functional $\psi$ is continuous for the $p_M$-norm, then 
            \begin{align*}
                \sup\left\{ \psi(m) : m\in M \right\} \leq ||\psi||_{op} \leq \sup \left \{ |\psi|(m): m\in M \right\}.
            \end{align*}
        
        In particular, if $\psi$ is positive, then $||\psi||_{op} =  \sup\left\{ \psi(m) : m\in M \right\}$;
    \end{itemize}
\end{thm}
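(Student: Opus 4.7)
The plan is to treat the three bullets separately, with (b) presenting the main obstacle.

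For (a), I would quantify the argument already used in Proposition~\ref{proposition integral are p_d cont on ordered vector spaces}. Given $v\in E_M$ and $\epsilon>0$, pick a representation $\pm v\leq\sum_{j=1}^n t_jm_j$ with $\sum_j t_j\leq p_M(v)+\epsilon$; positivity yields $\pm\psi(v)\leq\sum_j t_j\psi(m_j)\leq C(p_M(v)+\epsilon)$ with $C=\sup_{m\in M}\psi(m)$, and sending $\epsilon\to 0$ gives $||\psi||_{op}\leq C$. For the ``in particular'' addendum, note that $\pm m\leq 1\cdot m$ is a valid decomposition (using $m\geq 0$), so $p_M(m)\leq 1$, whence $||\psi||_{op}\geq\psi(m)/p_M(m)\geq\psi(m)=c$, forcing equality.

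Part (b) is the subtle one, and the main obstacle lies in reading ``positively constant'': pure constancy with a positive value is too weak, as on $E=\R^2$ with the pointwise order and $M=\{(1,1)\}$ the functional $(x,y)\mapsto -x+2y$ is $p_M$-continuous and constant with positive value on $M$ but not positive. The natural reading, and the converse of the ``in particular'' clause of (a), is that $\psi(m)=c$ for every $m\in M$ \emph{and} $||\psi||_{op}=c$. Granting this, fix $v\in E_M$ with $v\geq 0$ and $\epsilon>0$, choose $u=\sum_j t_jm_j\geq v$ with $\sum_j t_j\leq p_M(v)+\epsilon$, and set $w=u-v\geq 0$. Constancy gives $\psi(u)=c\sum_j t_j$; monotonicity of $p_M$ on positive vectors (straight from the definition, since $0\leq w\leq u$ and any decomposition of $u$ works for $w$) gives $p_M(w)\leq p_M(u)\leq\sum_j t_j$; and continuity then yields $|\psi(w)|\leq ||\psi||_{op}\,p_M(w)=c\sum_j t_j$. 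Therefore $\psi(v)=c\sum_j t_j-\psi(w)\geq 0$.

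For (c), I would decompose the double inequality into its two halves. The left one, $\sup_{m\in M}\psi(m)\leq ||\psi||_{op}$, falls out from $\psi(m)\leq ||\psi||_{op}\,p_M(m)\leq ||\psi||_{op}$ via $p_M(m)\leq 1$. For the right inequality I would invoke the Riesz structure of $E$: $E_M$ is an ideal, $p_M$ is a lattice seminorm (since $\pm v\leq\sum t_jm_j$ is equivalent to $|v|\leq\sum t_jm_j$, giving $p_M(|v|)=p_M(v)$), and any $p_M$-continuous $\psi$ is consequently order bounded, so its modulus $|\psi|$ exists as a positive linear functional on $E_M$ by Riesz--Kantorovich. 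Applying (a) to $|\psi|$, combined with the standard inequality $|\psi(v)|\leq |\psi|(|v|)$ and $p_M(|v|)=p_M(v)$, delivers $|\psi(v)|\leq\sup_{m\in M}|\psi|(m)\,p_M(v)$, i.e., $||\psi||_{op}\leq\sup_{m\in M}|\psi|(m)$. For the ``in particular'' statement, positivity of $\psi$ forces $|\psi|=\psi$, collapsing the two bounds to equality.
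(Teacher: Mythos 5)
Your treatments of a) and c) coincide with the paper's. For a) the paper likewise quantifies the argument of Proposition \ref{proposition integral are p_d cont on ordered vector spaces} and uses $p_M(m)\leq 1$ for the ``in particular'' clause; for c) it runs the same computation through the modulus $|\psi|$, taking its existence and the identity $||\,|\psi|\,||_{op}=||\psi||_{op}$ from the normed-Riesz-space theory exactly as you propose. Two cosmetic remarks: in a) you divide by $p_M(m)$, which is unnecessary (and needs $p_M(m)>0$) --- the chain $\psi(m)\leq ||\psi||_{op}\,p_M(m)\leq ||\psi||_{op}$ suffices; and in your b) the display $||\psi||_{op}\,p_M(w)=c\sum_j t_j$ should read $\leq$, though the conclusion is unaffected.

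The genuine divergence is b), and your diagnosis is correct. Under the literal reading of ``positively constant'' (constant on $M$ with a positive value) the assertion fails, exactly by your example $\psi(x,y)=-x+2y$ on $\R^2$ with $M=\{(1,1)\}$; the same example contradicts the corollary the paper derives from b) (an order-unit functional with $\psi(u)\geq 0$ that is $||\cdot||_u$-continuous but not positive). The paper's own proof of b) is a contradiction argument whose key step reads ``$||\psi||_{op}=\psi(m)$ for every $m\in M$ by point b)'' --- a circular citation: the ``in particular'' clause of a) delivers that equality only for \emph{positive} $\psi$, which is the very conclusion sought, while continuity and constancy alone give only $\psi(m)\leq ||\psi||_{op}$. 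The extra hypothesis you impose, $||\psi||_{op}=c$, is precisely what the paper silently assumes; once it is granted, your direct argument (majorize $v\leq u=\sum_j t_jm_j$, bound $\psi(u-v)$ by $c\sum_j t_j$ using continuity and the monotonicity of $p_M$) is sound and cleaner than the paper's proof by contradiction, which also goes through under that assumption. So your proposal is correct, but b) as stated in the paper is not, and you have identified the precise point where its proof breaks down.
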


\begin{proof}
We start by showing point a). The fact that $\psi$ is continuous for the $p_M$-norm is given by Proposition \ref{proposition integral are p_d cont on ordered vector spaces}. Let now $v\in E_M$ and $C = \sup\left\{ \psi(m) : m \in M \right\}$. Then for every $\epsilon > 0$ there are $t_1,...,t_n\in \R_+$ and $m_1,...,m_n\in M$ such that 
    \begin{align*}
        \pm v \leq \sum_{j=1}^n t_jm_j\quad \text{ and } \quad \sum_{j=1}^n t_j \leq p_M(v)+\frac{\epsilon}{C}.
    \end{align*}
Therefore,
    \begin{align*} 
        |\psi(v)|  \leq \psi\left( \, \sum_{j=1}^n t_jm_j \right)
                    = \sum_{j=1}^n t_j \psi(m_j) \leq M \sum_{j=1}^n t_j \leq M p_M(v) + \epsilon.
    \end{align*}
As $\epsilon$ and $v$ were chosen arbitrarily, $||\psi||_{op} \leq \sup\left\{ \psi(m) : m\in M \right\}$. If $\psi$ is constant on the set $M$, then $\psi(m) \leq ||\psi||_{op}$ as $p_M(m) = 1$ for every $m\in M$. This implies that $||\psi||_{op} = \psi(m)$.

We continue by proving point b). In order to find a contradiction, suppose that $\psi$ is not positive. Then there is a non-zero positive $v\in E_M$ such that $p_M(v) = 1$ and $\psi(v) < 0$. Consequently, for every $\epsilon>0$ there are $t_1,...,t_n\in \R_+$ and $m_1,...,m_n\in M$ such that 
    \begin{align*}
        v \leq \sum_{j=1}^n t_jm_j\quad \text{ and } \quad 1 \leq \sum_{j=1}^n t_j \leq 1+\frac{\epsilon}{\;\;\;||\psi||_{op}}.
    \end{align*}
Now, $||\psi||_{op} = \psi(m)$ for every $m\in M$ by point b). On the one hand,
    \begin{align*}
        \psi \left( \, \sum_{j=1}^n t_jm_j -v \right) = ||\psi||_{op} \sum_{j=1}^n t_j - \psi(v) > ||\psi||_{op}.
    \end{align*}
On the other hand,
    \begin{align*}
        \psi \left( \, \sum_{j=1}^n t_jm_j -v  \right) \leq ||\psi||_{op}p_M\left(\,  \sum_{j=1}^n t_jm_j -v \right)
            \leq ||\psi||_{op}p_M\left( \, \sum_{j=1}^n t_jm_j\right) \leq ||\psi||_{op}+\epsilon.
    \end{align*}
Therefore,  $ \psi \left( \sum_{j=1}^n t_jm_j -v  \right) \leq ||\psi||_{op}$, as $\epsilon$ was chosen arbitrarily. But this is the contradiction searched. Hence, $\psi$ is positive.

Before proving point c), recall that if $\psi$ is a continuous functional on a Riesz space, then $|\psi|$ is also a continuous functional with $||\psi||_{op} = ||\, |\psi| \,||_{op}$ by \cite[Theorem 8.48]{infinitedim}. Now the estimation
    \begin{align*}
        \psi(m) \leq |\psi(m)| \leq |\psi|(m) \leq  ||\psi||_{op} p_d(m) = ||\psi||_{op}\quad \text{ holds for every } m\in M.
    \end{align*}
Hence, $\sup\left\{ \psi(m) : m\in M \right\} \leq ||\psi||_{op}$. To prove the second inequality, set $C = \sup\{ |\psi|(m) : m\in M\}$, and note that this value is finite as $|\psi|$ is also continuous. Let $v\in E_M$. Then for every $\epsilon > 0$ there are $t_1,...,t_n\in \R_+$ and $m_1,...,m_n\in M$ such that 
    \begin{align*}
        |v| \leq \sum_{j=1}^n t_jm_j\quad \text{ and } \quad \sum_{j=1}^n t_j \leq p_M(v)+\frac{\epsilon}{C}.
    \end{align*}
Thus,
    \begin{align*} 
        |\psi|(v)  \leq |\psi|(|v|) \leq |\psi| \left( \, \sum_{j=1}^n t_jm_j \right) 
                    = \sum_{j=1}^n t_j |\psi|(m_j) \leq M \sum_{j=1}^n t_j \leq M p_M(v) + \epsilon.
    \end{align*}
As $\epsilon$ and $v$ were chosen arbitrarily, $||\psi||_{op} \leq \sup\left\{ |\psi|(m) : m\in M \right\}$.
\end{proof}

This yields the following well-known result: 

\begin{cor}
Let $E$ be an ordered vector space with order unit $u$. Let $\psi$ be a functional on $E$ such that $\psi(u)\geq 0$. Then $\psi$ is positive if and only if it is $||\cdot||_u$-continuous.
\end{cor}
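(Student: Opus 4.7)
The plan is to invoke Theorem \ref{theorem 3tochmy for functional on majorized spaces} with the one-element majorizing subset $M = \{u\}$, for which the majorizing-norm machinery of Section \ref{Section majorized spaces} collapses exactly onto the order unit setting. First I would observe that $E_M = E$, since $u$ being an order unit means every $v \in E$ satisfies $\pm v \leq tu$ for some $t \geq 0$. Next I would identify $p_M$ with $||\cdot||_u$ on $E$: Example \ref{Example principal ideal} applied to the singleton $M = \{u\}$ gives $p_M = \frac{1}{p_M(u)}||\cdot||_u$, and since $\pm u \leq 1 \cdot u$ (using $u \geq 0$) and $p_M(u) \geq 1$, one obtains $p_M(u) = 1$. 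Finally, the singleton $M$ is automatically uniformly bounded with respect to any norm on $E$.

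For the forward implication I would assume $\psi$ is positive; then $\psi$ is trivially uniformly bounded on the one-element set $M = \{u\}$, and part (a) of Theorem \ref{theorem 3tochmy for functional on majorized spaces} directly delivers continuity of $\psi$ for $p_M = ||\cdot||_u$. Conversely, assuming $\psi$ is $||\cdot||_u$-continuous with $\psi(u) \geq 0$, the functional $\psi$ is vacuously constant on the one-element set $M$ with non-negative value, i.e.\ positively constant on $M$, so part (b) of the same theorem forces $\psi$ to be positive.

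I do not expect any genuine obstacle here: the whole conceptual point is to recognize that the choice $M = \{u\}$ makes the majorizing norm $p_M$ coincide with the order unit norm $||\cdot||_u$, and turns the boundedness and positive-constancy hypotheses of Theorem \ref{theorem 3tochmy for functional on majorized spaces} into tautologies for a singleton. Consequently both directions of the equivalence fall out of parts (a) and (b) respectively, making the classical result a direct corollary of the framework developed above.
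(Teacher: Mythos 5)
Your proposal is correct and follows essentially the same route as the paper: the paper's proof simply invokes parts (a) and (b) of Theorem \ref{theorem 3tochmy for functional on majorized spaces} for the singleton $M=\{u\}$, exactly as you do. Your additional verification that $E_M=E$ and $p_M=||\cdot||_u$ (with $p_M(u)=1$) just makes explicit what the paper leaves implicit.
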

\begin{proof}
Suppose that $\psi$ is positive. Then $\psi$ is $||\cdot||_u$-continuous and $||\psi||_{u} = \psi(u)$ by point a) of Theorem \ref{theorem 3tochmy for functional on majorized spaces}. On the other hand, if $\psi$ is $||\cdot||_u$-continuous, then it is positive by point b) of Theorem \ref{theorem 3tochmy for functional on majorized spaces}.
\end{proof}

\begin{example}
Let $G$ be a locally compact group which acts continuously and cocompactly on a locally compact space $X$. Then there is $K\subset X$ a compact set such that $X = \bigcup_{g \in G}gK$. Using the compactness of $K$, it is possible to find a relatively compact subset $U\subset X$ and an open subset $V\subset X$ such that $ K \subset U \subset \overline{U} \subset V.$ Thanks to Uryshon Lemma (\cite[Lemma $2.12$]{rudinanal}), there is a positive $\psi\in \mathcal{C}_{00}(X)$ such that $\psi = 1$ on $\overline{U}$ and $\psi = 0 $ on $X \setminus V$. Set $M = \left\{ _g\psi: g\in G\right\}$, where $_g\psi(x) = \psi(g^{-1}x)$ for every $g\in G$ and $x\in X$. We claim that $\mathcal{C}_{00}(X)_M = \mathcal{C}_{00}(X)$. Indeed, let $\phi\in \mathcal{C}_{00}(X)$ and let $K'= \supp(\phi)$. As the action of $G$ on $X$ is cocompact, there are $g_1,...,g_n\in G$ such that 
            \begin{align*}
                K'\subset \bigcup_{j=1}^ng_jK \subset \bigcup_{j=1}^ng_j\overline{U}.
            \end{align*}
        This implies that
            \begin{align*}
                |\phi| \leq ||\phi||_{\infty} \1_{\supp(\phi)}
                    \leq \sum_{j=1}^n ||\phi||_{\infty} \1_{g_j\overline{U}}
                    = \sum_{j=1}^n ||\phi||_{\infty} g_j\1_{\overline{U}}
                    \leq \sum_{j=1}^n ||\phi||_{\infty} g_j \psi
            \end{align*}
        which proves that $\mathcal{C}_{00}(X)_M = \mathcal{C}_{00}(X)$. By Riesz Representation Theorem (\cite[Theorem 14.14]{infinitedim}), every positive functional on $\mathcal{C}_{00}(X)$ is nothing but a Radon measure on $X$. Therefore, every Radon measure $\mu$ on $X$ which is uniformly bounded on $M$ is continuous for the $p_M$-norm by point a) of Theorem \ref{theorem 3tochmy for functional on majorized spaces}. In particular, every $G$-invariant Radon measure on $X$ is $p_M$-continuous. 
\end{example}

\section{Coherent subsets}\label{Section coherent subsets}

The notion of coherent subset comes originally from paradoxical decomposition phenomena. Actually, the coherency of a particular family of functions associated with a subset prevents any paradoxical decomposition of the set itself, see for example \cite[Chapter 1]{rosenblatt}. It is not difficult to traduce this notion in an ordered vector space framework.

\begin{defn}\label{definition vector translate property}
  We say that a positive subset $M$ of an ordered vector space $E$ is \textbf{coherent} if
    \begin{align*}
        \sum_{j=1}^nt_jm_j \geq 0 \quad \text{ implies that } \quad \sum_{j=1}^nt_j \geq 0 \quad \text{ for every } m_1,...,m_n\in M.
    \end{align*}
\end{defn}

Clearly, if $M$ consists of one vector, then it is coherent by the definition of ordered vector space. However, for two vectors sets, this is not anymore true already.

\begin{example}
Consider $\R^2$ with the lexicographic order, i.e., the order given by the relation
          \begin{align*}
                (v_1,v_2)\geq (w_1,w_2) \iff v_1 > w_1 \text{ or } v_1 = w_1 \text{ and }v_2\geq w_2.
          \end{align*}
We claim that the set $M = \{(1,1),(0,1)\}$ is not coherent. Indeed, fix $\alpha > 1$. Then
          \begin{align*}
                (1,1)-\alpha (0,1) = (1,1-\alpha) \geq 0.
          \end{align*}
But $1-\alpha < 0$. 
\end{example}

The Archimedean property can avoid the phenomena happening in the last example in the following way:

\begin{prop}
Let $E$ be an ordered vector space. Then $E$ is Archimedean if and only if the set $M = \{v,w\}$ is coherent for every two non-zero positive vectors $v,w\in E$.
\end{prop}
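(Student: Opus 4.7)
The plan is to prove the two implications separately, handling the $(\Leftarrow)$ direction by contrapositive and the $(\Rightarrow)$ direction by a sign analysis on coefficients together with a bootstrap argument using the Archimedean hypothesis.

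For the $(\Leftarrow)$ direction, I would assume $E$ is not Archimedean and exhibit a non-coherent pair of non-zero positive vectors. By hypothesis there exist $x \in E_+$ and $y \in E$ with $ny \leq x$ for every $n \in \N$ and $y \not\leq 0$. First, $x \neq 0$: otherwise the instance $n=1$ already forces $y \leq 0$. In the favorable case $y \in E_+$ (which is then non-zero since $y \not\leq 0$), take $v := x$ and $w := y$; the relation $x - ny \geq 0$ reads $1 \cdot v + (-n) w \in E_+$, while the coefficient sum $1-n$ is strictly negative for every $n \geq 2$, contradicting coherence of $\{v,w\}$. When $y$ is not itself positive, one constructs a positive witness from $y$; the natural candidate $w := x - y$ lies in $E_+$ (since $y \leq x$), and the iterated relations $ny \leq x$ translate into combinations of $v := x$ and this $w$ whose coefficient sums tend to $-\infty$.

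For the $(\Rightarrow)$ direction, I would assume $E$ is Archimedean, fix non-zero $v, w \in E_+$, and consider a combination $av + bw \geq 0$ with $a, b \in \R$. The goal is $a + b \geq 0$, which I argue by sign analysis on $(a,b)$. The case $a,b \geq 0$ is immediate, and the case $a, b \leq 0$ combined with $av + bw \geq 0$ forces $|a|v + |b|w = 0$; since both summands lie in $E_+$, each of them vanishes, and the non-vanishing of $v, w$ forces $a = b = 0$, so $a + b = 0 \geq 0$. The remaining case (up to symmetry) is $a \geq 0 > b$: the inequality becomes $av \geq |b|w$; if $a = 0$ then $|b|w \leq 0$ forces $w = 0$, contradicting non-vanishing, so $a > 0$ and we obtain $rw \leq v$ with $r := |b|/a$. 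The task then reduces to proving $r \leq 1$, equivalently $a + b \geq 0$.

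The principal obstacle is this last reduction: deriving $r \leq 1$ from the single scalar inequality $rw \leq v$ using only the Archimedean hypothesis. Since such an inequality does not iterate automatically in a general ordered vector space, the bootstrap has to exploit the positive residue $v - rw \in E_+$, inductively manufacturing a sequence of inequalities of the form $nw \leq (\textrm{a fixed positive element depending only on $v,w$})$ valid for every $n \in \N$, at which point the Archimedean axiom forces $w \leq 0$ and therefore $w = 0$, producing the desired contradiction. This delicate bootstrapping step is where the Archimedean hypothesis genuinely enters; the surrounding sign analysis is routine.
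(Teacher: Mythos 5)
In the $(\Rightarrow)$ direction your sign analysis tracks the paper's proof up to the last case, but the step you yourself flag as ``the principal obstacle'' --- deriving $r\le 1$ from $rw\le v$ --- is the entire content of the implication, and the bootstrap you sketch cannot be carried out. Take $E=\R^2$ with the coordinatewise order (which is Archimedean), $v=(1,0)$ and $w=(2,0)$: then $-2v+1\cdot w=0\ge 0$ while the coefficient sum is $-1$; in your notation this is $rv\le w$ with $r=2$, the residue $w-2v$ is $0$, and there is no contradiction to be manufactured, so no inductive scheme producing $nv\le(\text{fixed element})$ for all $n$ can exist. In other words, the equivalence as stated already fails for positively proportional pairs, so the gap in your argument is not a technicality. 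For comparison, the paper's route at this point is different: it sets $u=v+w$, uses the Archimedean hypothesis to make $(E_u,||\cdot||_u)$ a normed space, and concludes $a\ge|b|$ from monotonicity of $||\cdot||_u$ together with the claim $||v||_u=||w||_u=1$; but that claim fails for the same pair ($||v||_u=1/3$ and $||w||_u=2/3$ when $u=(3,0)$), so the difficulty you ran into is genuine and is not actually resolved by the paper's argument either. Any correct version of this direction must exclude, or treat separately, linearly dependent pairs.

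In the $(\Leftarrow)$ direction your main case ($y\in E_+$, $y\ne 0$) is exactly the paper's argument and is fine. The additional subcase you attempt --- $y\not\le 0$ but $y\notin E_+$ --- does not work as written: with $w:=x-y\in E_+$ the relation $ny\le x$ rewrites as $nw-(n-1)x\ge 0$, whose coefficient sum is $n-(n-1)=1$, so these combinations do not have coefficient sums tending to $-\infty$ and coherence of $\{x,w\}$ is not contradicted. The paper sidesteps this case entirely by quoting an exercise of Aliprantis--Tourky that supplies a witness with $y>0$ in any non-Archimedean space; if you insist on handling a general $y\not\le 0$ directly, you need a genuinely different construction of a positive witness, and it is not clear one exists for an arbitrary (non-lattice) ordered vector space.
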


\begin{proof}
We start by proving the \textit{if} part. Let $x\in E_+$ and $y\in E$ such that $ny \leq x$ for every $n \in \N$. In order to find a contradiction, suppose that $y > 0$. This implies that $0 \leq x-ny$. But this is in contradiction with the coherency of the set $M = \{x,y\}$. Hence, $y\leq 0$. We can conclude that $E$ is Archimedean. 

Let's turn our attention to the \textit{only if} direction. Let $v$ and $w$ be two non-zero positive vectors of $E$. Take $a,b\in \R$ such that $av+bw \geq 0$. If $a = 0$, then $bw \geq 0$. Therefore, $a+b = b \geq 0$. Same for the case $b=0$. If both $a$ and $b$ are positive, then also $a+b$ is positive. If both $a$ and $b$ are negative, then 
    \begin{align*}
        0 \leq av+bw = - \underbrace{\left(|a|v+|b|w\right)}_{> 0} < 0,
    \end{align*}
which is a contradiction. Therefore, it is left only the case $a>0$ and $b<0$ (the one $a<0$ and $b>0$ is similar). But one can write $av+bw = av-|b|w \geq 0$. Thus, $ av \geq |b|w$. Consider the non-zero positive vector $u = v+w$. Then $v,w\in E_u$ and the pair $(E_u, ||\cdot||_u)$ is a normed space. We can hence compute that $a = a||v||_u \geq |b|||w||_u = |b|$, since $||v||_u$ and $||w||_u$ are both one. It is possible to conclude that $0 \leq a-|b| = a+b.$ So, we can assert that the set $M = \{v,w\}$ is coherent. 
\end{proof}

This last result doesn't hold anymore when considering sets of three elements. 

\begin{example}\label{Example coherent 3 elements}
Let $\F_2 = \langle a ,b \rangle$ be the free group on 2 generators and let $\ell^\infty (\F_2)$ be the (Archimedean) Banach lattice of all real bounded functions on $\F_2$. Let A be the set of all reduced words of $\F_2$ starting with the generator $a$ and let $M = \{ \1_{a^{-1}A}, \1_{bA}, \1_A \}$. We claim that $M$ is not coherent. Indeed, define the function 
    \begin{align*}
        f = \1_{a^{-1}A} - \1_{bA} - \1_A.
    \end{align*}
    It is possible to observe that if $w\in \F_2$ is a reduced word starting with $a$ or $b$, then $f(w) = 0$. On the other hand, if $w\in \F_2$ is a reduced word starting with $a^{-1}$, $b^{-1}$ or the identity element $e$, then $f(w) = 1$. This implies that $f \geq 0$ and $f\neq 0$. However, the sum of its coefficients is equal to -1.
\end{example}

The following theorem groups together different characterizations of the coherency of a set $M$. In particular, it shows that the norm $p_M$ and the cone $C_M$ encode the data of the coherence property of $M$ in the space $E_M$.

\medskip
Recall that a \textbf{base} for a cone $C$  in a vector space $E$ is a non-empty convex set $B \subset C$ such that every non-zero $c\in C$ has a unique representation of the form $c = t b$ for $b\in B$ and $t \in \R_+$.  

\begin{thm}\label{theorem coherent sets and norms}
Let $E$ be an ordered vector space and let $M$ be a positive subset of $E$. Then the following assertions are equivalent:
    \begin{itemize}
        \item[a)] the subset $M$ is coherent;
        
        \item[b)] there exists a positive functional $\psi$ on  $E_M$ constant equal 1 on $M$. Moreover, if $E$ is an ordered normed space and $M$ is uniformly bounded, then $\psi$ is $p_M$-continuous with operator norm equal 1;
        
        \item[c)] the convex set 
            \begin{align*}
                B = \left\{\; \sum_{j=1}^n t_jm_j : \sum_{j=1}^n t_j = 1,\,  t_j\in \R_+ \text{ and }m_j \in M\, \text{ for every } j=1,...,n\right\}
            \end{align*}
            is a base of $C_M$. Moreover, if $E$ is an ordered normed space and $M$ is uniformly bounded, then $B$ is $p_M$-bounded;
        
        \item[d)] the equality
            \begin{align*}
                p_M \left( \,\sum_{j=1}^n t_jm_j \right) = \sum_{j=1}^nt_j \quad
                \text{ holds for every }t_1,...,t_n \in \R_+ \text{ and }m_1,...,m_n\in M.
            \end{align*}
            In other words, $p_M$ is additive on $C_M$;
        
        \item[e)] the cones $C_M$ and $c_{00}(M)_+$ are strictly positive isomorphic. Moreover, if $E$ is an ordered normed space and $M$ is uniformly bounded, then $C_M$ and $c_{00}(M)_+$ are isometric with respect to the norms $p_M$ and $||\cdot||_1$.
    \end{itemize}
\end{thm}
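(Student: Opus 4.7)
The plan is to establish the five conditions as equivalent through the chain $\text{a)} \Leftrightarrow \text{d)}$, $\text{a)} \Leftrightarrow \text{b)}$, $\text{b)} \Rightarrow \text{c)}$, $\text{c)} \Rightarrow \text{a)}$, and $\text{a)} \Leftrightarrow \text{e)}$. The common thread is the assignment $\sum t_j m_j \mapsto \sum t_j$, which coherency is precisely what it takes to render well defined; the other four conditions are different avatars of this same linearity.

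For $\text{a)} \Leftrightarrow \text{d)}$, one direction is tautological ($p_M(\sum t_j m_j) \leq \sum t_j$ from the definition), and the reverse comes from applying coherency to $\sum s_k m'_k - \sum t_j m_j \geq 0$. Conversely, splitting $\sum t_j m_j = d - c$ by sign of the coefficients gives $d, c \in C_M$ with $d \geq c$, and monotonicity of $p_M$ combined with $\text{d)}$ recovers $\sum t_j \geq 0$. The equivalence $\text{a)} \Leftrightarrow \text{b)}$ is analogous in spirit: $\text{b)} \Rightarrow \text{a)}$ is $\sum t_j = \psi(\sum t_j m_j) \geq 0$, and for $\text{a)} \Rightarrow \text{b)}$ coherency makes $\psi_0(\sum t_j m_j) := \sum t_j$ a well-defined linear functional on $\Span(M) = C_M - C_M$. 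I would then dominate $\psi_0$ by the sublinear functional
\[
q(v) = \inf\left\{ \sum s_k \,:\, v \leq \sum s_k m_k,\ s_k \in \R_+,\ m_k \in M \right\}
\]
on $E_M$ (taking the empty sum as $0$, so that $q(v) = 0$ for $v \leq 0$), apply Hahn--Banach to extend $\psi_0$ to a linear $\psi \leq q$ on $E_M$, and use the empty-sum convention to deduce positivity. The operator-norm claim then follows from Theorem~\ref{theorem 3tochmy for functional on majorized spaces}.

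For $\text{b)} \Rightarrow \text{c)}$, the functional $\psi$ is strictly positive on $C_M$ (since $c = \sum t_j m_j \in C_M$ with $\psi(c) = \sum t_j = 0$ and $t_j \geq 0$ forces $c = 0$), so each $c \in C_M \setminus \{0\}$ factors as $c = \psi(c) \cdot (c/\psi(c))$ with $c/\psi(c) \in B$; uniqueness is immediate by re-applying $\psi$, and $p_M(b) \leq 1$ on $B$ gives the boundedness. For the converse $\text{c)} \Rightarrow \text{a)}$, the base structure produces an additive, positively homogeneous map $\phi \colon C_M \to \R_+$, $\phi(tb) = t$, with $\phi(\sum t_j m_j) = \sum t_j$ on any non-negative representation. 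The crucial point is monotonicity of $\phi$ on $C_M$: a putative $c_1 \leq c_2$ with $\phi(c_1) > \phi(c_2)$ translates, after writing $c_i = \phi(c_i) b_i$, into $b_2 \geq \lambda b_1$ in $B$ for some $\lambda > 1$, and iterating this inequality against the ambient Archimedean structure forces $b_1 = 0$, contradicting $0 \notin B$. Once monotonicity is secured, the splitting argument of $\text{d)} \Rightarrow \text{a)}$ transfers verbatim with $\phi$ in place of $p_M$.

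For $\text{a)} \Leftrightarrow \text{e)}$, the natural map $T \colon c_{00}(M) \to E_M$, $T(f) = \sum_m f(m) m$, restricts to a surjection $c_{00}(M)_+ \to C_M$; under coherency the sum-of-coefficients functional canonically identifies the two cones, and the isometry $||\cdot||_1 \leftrightarrow p_M$ is precisely $\text{d)}$. Conversely, any strictly positive cone isomorphism transports additivity of $||\cdot||_1$ back to $p_M$, recovering $\text{d)}$. The main obstacle in the whole argument is $\text{c)} \Rightarrow \text{a)}$: the mere existence of a base is strictly weaker than coherency in non-Archimedean ambient spaces (as can already be witnessed with the lexicographic ordering on $\R^2$), so the Archimedean structure of $E$ must be used precisely at the monotonicity step for $\phi$; every other implication reduces to a formal manipulation of the identity $\sum t_j$.
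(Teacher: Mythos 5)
Most of your argument is sound and in places cleaner than the paper's: your direct proof of a) $\Rightarrow$ d) (apply coherency to $\sum_k s_k m_k' - \sum_j t_j m_j \geq 0$ for an arbitrary majorant and take the infimum) avoids the paper's detour through the functional of b); your sign-splitting proof of d) $\Rightarrow$ a) dispenses with the paper's reduction to rational and then integer coefficients, which is indeed unnecessary; and your Hahn--Banach extension of $\psi_0$ dominated by the one-sided gauge $q$ is a legitimate substitute for the paper's appeal to Kantorovich's extension theorem (it is essentially that theorem's proof). The implications b) $\Rightarrow$ a), b) $\Rightarrow$ c) and the forward half of a) $\Leftrightarrow$ e) are also fine.

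The genuine gap is c) $\Rightarrow$ a), and it cannot be closed. Your ``iteration'' deducing $b_1=0$ from $b_2 \geq \lambda b_1$ with $\lambda>1$ is not an argument: nothing allows you to pass from $b_2 \geq \lambda b_1$ to $b_2 \geq \lambda^n b_1$, so the Archimedean property is never actually engaged. Worse, the implication is false even in a finite-dimensional Banach lattice. Take $E=\R^2$ with the coordinatewise order and $M=\{(2,1),(1,\tfrac{2}{5})\}$. The affine line through the two points misses the origin, so every ray of $C_M$ meets the segment $B=\mathrm{co}(M)$ exactly once; hence $B$ is a base of $C_M$ (and is $p_M$-bounded). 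Yet $(2,1)-\tfrac{3}{2}(1,\tfrac{2}{5})=(\tfrac{1}{2},\tfrac{2}{5})\geq 0$ while $1-\tfrac{3}{2}<0$, so $M$ is not coherent. The issue your monotonicity worry is circling is exactly this: the coefficient functional of a base is strictly positive on $C_M$ but need not be positive for the ambient order of $E$ (here it is $\phi(x,y)=3x-5y$, with $\phi(0,1)<0$), so it cannot be applied to the inequality $\sum_j t_j m_j\geq 0$. Note that the paper's own proof of c) $\Rightarrow$ a) commits precisely this error when it applies the functional supplied by \cite[Theorem 1.47]{conesandduality} to that inequality; condition c) is in fact strictly weaker than the other four. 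Your instinct that this is the problematic implication is correct, but the obstruction is not confined to non-Archimedean spaces, so your proposed repair does not work either.
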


The strategy for proving this last theorem is the following. Firstly, we show that points a),b) and c) are equivalent. Secondly, we demonstrate that both d) and e) are equivalent to the coherent property. 

\begin{proof}[Proof of Theorem \ref{theorem coherent sets and norms}]
We start by proving that a) implies b). Suppose that $M$ is coherent and define the linear map
    \begin{align*}
        \omega : \Span_\R \left(M \right)  \longrightarrow \R, \quad
                \sum_{j=1}^nt_jm_j  \longmapsto \omega \left( \,\sum_{j=1}^nt_jm_j \right)  = \sum_{j=1}^nt_j.
    \end{align*}
Note that $\omega$ is well-defined thanks to the coherent property. Indeed, suppose that $\sum_{j=1}^nt_jm_j = \sum_{j=1}^nt_j'm_j$. Then $\sum_{j=1}^n(t_j-t'_j)m_j = 0$. Therefore, $ \sum_{j=1}^n(t_j-t'_j) \geq 0$ by the coherency of $M$. Similarly, $\sum_{j=1}^n(t_j'-t_j) \geq 0$. We can conclude that $\sum_{j=1}^n(t_j-t'_j) = 0$ and hence $\sum_{j=1}^nt_j = \sum_{j=1}^n t'_j$. Moreover, it is positive and $\omega(m)=1$ for every $m\in M$. 
As $\Span_\R\left( M \right)$ is a majorizing vector subspace of $E_M$, it is possible to employ Kantorovich Theorem (\cite[Theorem 1.36]{conesandduality}) to extend $\omega$ in a positive way to all $E_M$.  The extension is a positive functional on $E_M$ constant equal 1 on $M$. if $E$ is an ordered normed space and $M$ is uniformly bounded, then $\psi$ is $p_M$-continuous and has operator norm equal to $1$ thanks to point a) of Theorem \ref{theorem 3tochmy for functional on majorized spaces}.

Suppose now that b) is true and let $\psi$ be the relative positive functional. Then $\psi (v) > 0$ for every non-zero vector $v\in C_M$ and
\begin{align*}
    B = \left\{ v \in C_M : \psi(v) = 1 \right\}.
\end{align*}
Therefore, $B$ is a base for $C_M$ by \cite[Theorem 1.47]{conesandduality}. if $E$ is an ordered normed space and $M$ is uniformly bounded, then $B$ is $p_M$-bounded. 

Finally, suppose that c) holds and we want to show that $M$ is coherent. Let $t_1,...,t_n\in \R$ and $m_1,...,m_n\in M$ such that $\sum_{j=1}^nt_jm_j \geq 0$. Thanks to \cite[Theorem 1.47]{conesandduality}, there is a functional $\psi$ on  $E_M$ strictly positive on $C_M$. Since $M\subset B$ and $B$ is a base for $C_M$, we can suppose that $\psi(m) = 1$ for every $m\in M$. Then
    \begin{align*}
        0 \leq \psi\left(\, \sum_{j=1}^nt_jm_j \right) = \sum_{j=1}^nt_j\psi(m_j) = \sum_{j=1}^nt_j.
    \end{align*}
Therefore, $\sum_{j=1}^nt_j \geq 0$ as wished. 

We show now that point d) is equivalent to the coherent property. 
Suppose that $M$ is coherent, and let $t_1,...,t_n \in \R_+$ and $m_1,...,m_n\in M$. Then 
    \begin{align*}
        p_M \left( \,\sum_{j=1}^n t_jm_j \right) \leq \sum_{j=1}^nt_jp_M(m_j) \leq \sum_{j=1}^nt_j
    \end{align*}
only because $p_M$ is monotone. Let now $\psi$ be the positive functional on $E_M$ constant equal 1 on $M$ given by point b). Note that $\psi(v)\leq p_M(v)$ for every $v\in E_M$. Indeed, let $v\in E_M$ and $\epsilon > 0$. Then there are $t_1,...,t_n\in \R_+$ and $m_1,...,m_n\in M$ such that $\pm v \leq \sum_{j=1}^nt_jm_j$ and $\sum_{j=1}^nt_j \leq p_M(v)+\epsilon$. Therefore, $\psi(v) \leq \sum_{j=1}^nt_j \leq p_M(v) +\epsilon$. As $\epsilon$ was chosen arbitrarily, $\psi(v)\leq p_M(v)$. This implies that
    \begin{align*}
        p_M\left( \,\sum_{j=1}^nt_jm_j \right) \geq \psi \left( \,\sum_{j=1}^nt_jm_j \right) = \sum_{j=1}^nt_j\psi(m_j) = \sum_{j=1}^nt_j.
    \end{align*}
It is possible to conclude that $p_M \left( \sum_{j=1}^n t_jm_j\right)  = \sum_{j=1}^nt_j$ as wished.

Let's look at the reverse implication. In order to find a contradiction, suppose that the subset $M$ is not coherent. Therefore, there are $t_1,...,t_n\in \R$ and $m_1,...,m_n\in M$ such that 
    \begin{align*}
        \sum_{j=1}^n t_j m_j \geq 0 \quad \text{ but } \quad \sum_{j=1}^n t_j < 0.
    \end{align*}
We can suppose that every $t_j$ is in $\Q$. Indeed, if it is not the case, we can take $\epsilon > 0$ such that $\epsilon < \frac{-\sum_{j=1}^n t_j}{n}$, and we can chose $q_j \in \Q$ such that $q_j \geq t_j$ and $q_j-t_j < \epsilon$ for every $j$. Then
    \begin{align*}
        \sum_{j=1}^n q_jm_j \geq \sum_{j=1}^nt_jm_j \geq 0,
    \end{align*}
and
    \begin{align*}
        \sum_{j=1}^n q_j = \sum_{j=1}^n (q_j-t_j) +\sum_{j=1}^n t_j 
                        \leq n\epsilon + \sum_{j=1}^n t_j < 0.
    \end{align*}
Now, there is $m\in \N$ st $mq_j = z_j \in \Z$ for every $j\in \{1,...,n\}$. Then
    \begin{align*}
        0 \leq \sum_{j=1}^n mq_j m_j  = \sum_{j=1}^n z_j m_j 
        = \sum_{z_j \in I_+} z_jm_j - \sum_{z_j \in I_-} |z_j|m_j,
    \end{align*}
where $I_+ = \left\{ z_j : z_j > 0 \right\}$ and $I_- = \left\{ z_j : z_j < 0 \right\}.$
Therefore, 
    \begin{align*}
        \sum_{z_j \in I_-} |z_j| = p_M \left( \sum_{z_j \in I_-} |z_j|m_j \right)
        \leq p_M \left( \sum_{z_j \in I_+} |z_j|m_j \right) = \sum_{z_j \in I_+} z_j.
    \end{align*}
On the other side,
    \begin{align*}
        0 > \sum_{j=1}^n mq_j = \sum_{j=1}^n z_j = \sum_{z_j \in I_+} z_j - \sum_{z_j \in I_-} |z_j|
    \end{align*}
which implies that $\sum_{z_j\in I_-}|z_j|> \sum_{z_j\in I_+}z_j$. But this is a contradiction.

At the end, we show the equivalence between point e) and point a). Therefore, suppose that $M$ as the coherent property, and define the map
    \begin{align*}
        \iota: c_{00}(M)_+ \longrightarrow C_M, \quad \sum_{\substack{m\in A \\ A \subset_f M}} t_m \delta_m \longmapsto \iota \left( \sum_{\substack{m\in A \\ A \subset_f M}} t_m \delta_m \right) = \sum_{\substack{m\in A \\ A \subset_f M}} t_m m.
    \end{align*}
The $\iota$ is well-defined. Indeed,
    \begin{align*}
        \sum_{\substack{m\in A \\ A \subset_f M}} t_m \delta_m = \sum_{\substack{m\in A \\ A \subset_f M}} t_m' \delta_m
        \iff
        \sum_{\substack{m\in A \\ A \subset_f M}} t_m = \sum_{\substack{m\in A \\ A \subset_f M}} t_m'
        \iff
        \sum_{\substack{m\in A \\ A \subset_f M}} t_m m = \sum_{\substack{m\in A \\ A \subset_f M}} t_m' m.
    \end{align*}
This equivalences are true thanks to the fact that the two sets $M$ and $\{\delta_m : m\in M\}$ are coherent and because the $t_m$'s and the $t_m'$'s are positive. It is clear that $\iota$ is additive. Therefore, we proceed to show that it is a bijection. The surjectivity is clear. Let focus on the injectivity. Suppose that there are
    \begin{align*}
          \sum_{\substack{a\in A \\ A \subset_f M}} t_a \delta_a \text{ and } \sum_{\substack{b\in B \\ B \subset_f M}} t_b \delta_b \text{ in } c_{00}(\N) \text{ such that } \iota \left( \sum_{\substack{a\in A \\ A \subset_f M}} t_a \delta_a \right) = \iota \left( \sum_{\substack{b\in B \\ B \subset_f M}} t_b  \delta_b \right).
    \end{align*}
Then
    \begin{align*}
        \sum_{\substack{a\in A \\ A \subset_f M}} t_a a = \sum_{\substack{b\in B \\ B \subset_f M}} t_b  b.
    \end{align*}
Take now the positive functional on $\psi$ on $E_M$ constant equal 1 on $M$ given by point b). Then 
     \begin{align*}
        \sum_{\substack{a\in A \\ A \subset_f M}} t_a \psi(a) = \sum_{\substack{a\in A \\ A \subset_f M}} t_a = \sum_{\substack{b\in B \\ B \subset_f M}} t_b =  \sum_{\substack{b\in B \\ B \subset_f M}} t_b  \psi(b).
    \end{align*}
Therefore,
    \begin{align*}
        \sum_{\substack{a\in A \\ A \subset_f M}} t_a \delta_a = \sum_{\substack{b\in B \\ B \subset_f M}} t_b \delta_b
    \end{align*}
as wished. Finally, if $E$ is an ordered normed space and $M$ is uniformly bounded, $\iota$ is an isometry with respect to the norms $p_M$ and $||\cdot||_1$ thanks to point d).

The reverse implication is straightforward, since we can identify the subset $M$ to the positive coherent $||\cdot||_1$-uniformly bounded subset $\left\{ \delta_m : m\in M \right\}$ of $c_{00}(M)$.
\end{proof}

\begin{rem}
Point b) may seem a little strange at first glance but it's something that happens naturally. Indeed, let $f$ be the function of Example \ref{Example coherent 3 elements}. Then 
            \begin{align*}
                f = \1_{a^{-1}A} - \1_{bA} - \1_A \geq 0 \iff \1_{a^{-1}A} \geq \1_{bA} + \1_A.
            \end{align*}
Therefore,
            \begin{align*}
                p_{M}( \1_{bA} + \1_A ) \leq p_{M}( \1_{a^{-1}A} ) = 1
            \end{align*}
where $M = \{ \1_{a^{-1}A}, \1_{bA}, \1_A \}$.
\end{rem}

Suppose that $C$ is a generating cone in a vector space $E$ with bounded base $B$. Take $M = B$. Then $p_M$ is equal to the base norm associated to the cone $C$. Therefore, Theorem \ref{theorem coherent sets and norms} generalizes \cite[Theorem 1]{ellis}. Hence, majorizing norms generalize base norms. Summarizing:

\begin{cor}
Let $C$ be a generating cone in a vector space $E$ with bounded base $B$. Then $(E, p_B)$ is an ordered normed space.
\end{cor}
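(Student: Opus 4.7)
The plan is to apply Theorem \ref{theorem coherent sets and norms} with $M = B$. First I would check that $E_B = E$. Since $C$ is generating, every $v \in E$ decomposes as $v = x - y$ with $x, y \in C$. As $B$ is a base of $C$, each of $x, y$ has the form $tb$ for some $t \geq 0$ and $b \in B$; hence $\pm v \leq x + y \in C_B$, which shows $E_B = E$.

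Next I would verify that $B$ is coherent. Being a base of $C$, $B$ admits, via \cite[Theorem 1.47]{conesandduality}, a strictly positive linear functional $\psi$ on $E$ satisfying $\psi \equiv 1$ on $B$. This is precisely condition (b) of Theorem \ref{theorem coherent sets and norms}, so by the equivalence with (a) the set $B$ is coherent. Part (d) of the same theorem then shows that $p_B$ is additive on $C$, takes value $1$ on every element of $B$, and in particular coincides with $\psi$ on $C$. From the general properties collected in Section \ref{Section majorized spaces}, $p_B$ is moreover monotone, sub-additive and positively homogeneous on $E = E_B$.

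It remains to verify that $p_B$ is separating, which I expect to be the main obstacle. If $p_B(v) = 0$, then for every $\epsilon > 0$ there exists $c_\epsilon \in C$ with $\pm v \leq c_\epsilon$ and $\psi(c_\epsilon) < \epsilon$, so every strictly positive functional $\phi \leq p_B$ must vanish on $v$. The hypothesis that the base $B$ is bounded is precisely what forces this family of positive functionals to separate points of $E$, so that $v = 0$; this is essentially the content of \cite[Theorem 1]{ellis}, now read through the identification $p_B = \psi$ on $C$. Once separation is granted, the monotonicity of $p_B$ together with the generating property of $C$ immediately yield that $(E, p_B)$ is an ordered normed space in the sense of Section \ref{Section preliminaries}.
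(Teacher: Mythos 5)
Your reduction to Theorem \ref{theorem coherent sets and norms} with $M=B$ is the same route the paper takes, and the first two steps are fine: $E_B=E$ follows from the generating property exactly as you say, and coherence of $B$ follows from \cite[Theorem 1.47]{conesandduality} via condition (b), whence additivity of $p_B$ on $C$ and $p_B=\psi$ there. The soft spot is the separation step, which you correctly flag as the crux but do not actually close. The mechanism you propose --- that boundedness of $B$ ``forces the family of positive functionals dominated by $p_B$ to separate points'' --- is circular: that this family separates points of $E$ is essentially equivalent to $p_B$ being a norm (if $p_B$ is merely a seminorm, functionals dominated by it can only separate points of $E/\ker p_B$), so it cannot serve as the input. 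Nor does knowing $p_B=\psi$ on $C$ alone let you import \cite[Theorem 1]{ellis}, which is a statement about the Minkowski functional of $\mathrm{co}(B\cup(-B))$ on all of $E$.

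The missing link --- and it is exactly the identification the paper relies on when it asserts that $p_B$ ``is equal to the base norm associated to the cone $C$'' --- is a two-line computation. If $\pm v\le c$ with $c\in C$, then $v=\tfrac12(c+v)-\tfrac12(c-v)$ with $c\pm v\in C$ and $\psi(c+v)+\psi(c-v)=2\psi(c)$, so $v\in\psi(c)\,\mathrm{co}(B\cup(-B))$; conversely, any $v=\lambda x-\mu y$ with $x,y\in B$ and $\lambda,\mu\ge 0$ satisfies $\pm v\le\lambda x+\mu y$. Hence $p_B$ is precisely the Minkowski functional of $\mathrm{co}(B\cup(-B))$, i.e.\ the classical base norm. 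In particular $p_B(v)=0$ forces $v\in\epsilon\,\mathrm{co}(B\cup(-B))$ for every $\epsilon>0$, so $\mathrm{co}(B\cup(-B))$ contains the entire ray through $v$, and this is exactly what the boundedness of the base rules out unless $v=0$. With that inserted, your argument is complete and coincides with the paper's.
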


\begin{cor}
Let $E$ be an ordered vector space. Suppose that $E$ has a strictly positive functional $\psi$. Then every positive subset $M$ of $E$ on which $\psi$ is constant is coherent.
\end{cor}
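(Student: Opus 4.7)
The plan is to derive this corollary directly from the characterization already established in Theorem \ref{theorem coherent sets and norms}, namely the equivalence (a)$\Leftrightarrow$(b). The key observation is that the hypothesis of the corollary fits part (b) of that theorem up to a normalizing scalar.

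First I would let $c$ denote the constant value $\psi(m)$ for $m \in M$, and observe that $c > 0$. Indeed, under the standing convention $0 \not\in \text{co}(M)$ ensuring that $C_M$ is a genuine cone, the set $M$ contains at least one non-zero vector $m$, and strict positivity of $\psi$ forces $c = \psi(m) > 0$.

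Second I would prefer the direct route and check the coherence condition of Definition \ref{definition vector translate property} by one line. Given $\sum_{j=1}^n t_j m_j \geq 0$ with $m_j \in M$ and $t_j \in \R$, applying $\psi$ to both sides, using linearity, and using that $\psi$ sends $E_+$ to $\R_+$ yields
\begin{align*}
c \sum_{j=1}^n t_j \;=\; \sum_{j=1}^n t_j\, \psi(m_j) \;=\; \psi\!\left(\sum_{j=1}^n t_j m_j\right) \;\geq\; 0,
\end{align*}
and dividing by $c > 0$ gives $\sum_{j=1}^n t_j \geq 0$.

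Alternatively, one can appeal to Theorem \ref{theorem coherent sets and norms} more abstractly: the rescaled functional $c^{-1}\psi$, restricted to $E_M$, is positive and takes the value $1$ on every element of $M$, so condition (b) of that theorem holds, whence (a) does too. There is no real obstacle in this argument; the proof is essentially a matter of recognizing that the corollary's hypothesis is, up to a scalar, the condition (b) already proved equivalent to coherence.
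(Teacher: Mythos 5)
Your proposal is correct. The ``alternative'' route you sketch at the end is precisely the paper's proof: one normalizes the restriction of $\psi$ to $E_M$ to obtain the functional of point b) of Theorem \ref{theorem coherent sets and norms}, and concludes by the equivalence b) $\Rightarrow$ a). Your primary route --- applying $\psi$ directly to $\sum_{j=1}^n t_j m_j \geq 0$ and dividing by the constant $c>0$ --- is a self-contained one-line verification that in effect inlines the chain b) $\Rightarrow$ c) $\Rightarrow$ a) from the theorem's proof; it is more elementary and avoids invoking the theorem at all, at the cost of not exhibiting the structural reason (the existence of a base for $C_M$) that the theorem packages. Your preliminary observation that $c>0$, justified by the standing convention $0\notin\mathrm{co}(M)$ together with strict positivity, is a detail the paper leaves implicit and is worth having.
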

\begin{proof}
Let $M$ be a positive subset of $E$ on which $\psi$ is constant. Then a normalization of the restriction of $\psi$ on $E_M$ is the functional requested by point b) of Theorem \ref{theorem coherent sets and norms}. Therefore, $M$ is coherent. 
\end{proof}

\begin{cor}
Let $(E,||\cdot||)$ be an ordered normed space and suppose that $||\cdot||$ is additive, i.e., $||v+w|| = ||v||+||w||$ for every $v,w\in E_+$. Then every positive $||\cdot||$-norm constant subset $M$ of $E$ is coherent. 
\end{cor}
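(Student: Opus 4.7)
The strategy is to verify condition b) of Theorem~\ref{theorem coherent sets and norms} and then invoke the implication b) $\Rightarrow$ a): it suffices to exhibit a positive linear functional on $E_M$ taking the constant value $1$ on $M$. The natural candidate is the norm $\|\cdot\|$ itself, suitably extended from $E_+$ and then normalized.

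Let $c$ denote the common value of $\|m\|$ for $m \in M$; if $c = 0$ then $M \subseteq \{0\}$ and the statement is trivial, so I assume $c > 0$. First I would observe that $E_M$ has generating positive cone $E_{M,+} := E_M \cap E_+$: given $v \in E_M$ with $\pm v \leq d$ for some $d \in C_M$, one writes $v = \tfrac{1}{2}\bigl((d+v) - (d-v)\bigr)$, exhibiting $v$ as a difference of two elements of $E_{M,+}$. Next, I would define $\phi : E_M \to \R$ by $\phi(v_1 - v_2) = \|v_1\| - \|v_2\|$ for $v_1, v_2 \in E_{M,+}$. The additivity hypothesis makes $\phi$ well-defined: if $v_1 - v_2 = u_1 - u_2$ with all terms in $E_{M,+}$, then $v_1 + u_2 = u_1 + v_2$ in $E_+$, so additivity yields $\|v_1\| + \|u_2\| = \|u_1\| + \|v_2\|$, whence $\|v_1\| - \|v_2\| = \|u_1\| - \|u_2\|$. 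The same additivity, combined with positive homogeneity of $\|\cdot\|$, makes $\phi$ additive and real-homogeneous (the case of negative scalars is handled by swapping the roles of $v_1$ and $v_2$). By construction $\phi$ is positive, and $\phi(m) = c$ for every $m \in M$, so $\psi := \phi / c$ is a positive linear functional on $E_M$ constantly equal to $1$ on $M$.

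Applying b) $\Rightarrow$ a) of Theorem~\ref{theorem coherent sets and norms} concludes that $M$ is coherent. The only delicate step is the verification of well-definedness and linearity of $\phi$, and both follow cleanly from the additivity assumption applied on $E_+$. This result thus strengthens the preceding corollary: instead of postulating a strictly positive functional on $E$, one produces such a functional \emph{a posteriori} from the additivity of the norm on the positive cone.
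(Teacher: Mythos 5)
Your argument is correct, but it enters Theorem~\ref{theorem coherent sets and norms} through a different door than the paper does. The paper verifies condition~d) directly: after normalizing so that $||m||=1$ for all $m\in M$, it sandwiches $\sum_j t_j \geq p_M\bigl(\sum_j t_jm_j\bigr) \geq \bigl|\bigl|\sum_j t_jm_j\bigr|\bigr| = \sum_j t_j||m_j|| = \sum_j t_j$, where the first inequality is the trivial upper bound on the infimum defining $p_M$, the second is the estimate $||v||\leq C\,p_M(v)$ of Proposition~\ref{proposition p_M is a norm}, and the last equality is the additivity hypothesis; equality throughout gives additivity of $p_M$ on $C_M$. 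You instead verify condition~b), extending $||\cdot||$ from $E_{M,+}=E_M\cap E_+$ to a positive linear functional on $E_M$ and normalizing. Every step of yours checks out: the decomposition $v=\tfrac{1}{2}\bigl((d+v)-(d-v)\bigr)$ does show $E_{M,+}$ generates $E_M$ (note $\pm(d\pm v)\leq 2d\in C_M$, so both pieces stay in $E_M$), and well-definedness and linearity of $\phi$ follow from additivity exactly as you say. Your route is longer but more self-contained --- it never invokes Proposition~\ref{proposition p_M is a norm} nor the monotonicity of $||\cdot||$, only its additivity on $E_+$ --- and it isolates the reusable fact that a norm additive on the positive cone extends to a positive linear functional, which is essentially the mechanism behind base norms on AL-type spaces. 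One quibble: if $c=0$ then $M=\{0\}$, which is \emph{not} coherent (take $t_1=-1$), so the case is not ``trivial''; it is, however, excluded by the paper's standing requirement that $0\notin\mathrm{co}(M)$, so nothing substantive is lost.
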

\begin{proof}
Let $M$ be a positive $||\cdot||$-norm constant subset of $E$. Without loss of generality, we can suppose that $|| m || = 1$ for every $m\in M$. Let $t_1,...,t_n\in \R_+$ and $m_1,...,m_n\in G$. Then
    \begin{align*}
        \sum_{j=1}^n t_j \geq p_M \left(\,  \sum_{j=1}^n t_jm_j \right)  \geq \left|\left|\, \sum_{j=1}^n t_jm_j \right|\right| =  \sum_{j=1}^nt_j||m_j|| =  \sum_{j=1}^nt_j.
    \end{align*}
Therefore, condition d) of Theorem \ref{theorem coherent sets and norms} is satisfied. Thus, $M$ is coherent.
\end{proof}

Before continuing, note that is it possible to extend the isometry 
    \begin{align*}
        \iota: (c_{00}(M), ||\cdot||_1) \longrightarrow (C_M, p_M)
    \end{align*}
of point e) of Theorem \ref{theorem coherent sets and norms} to an isometry
    \begin{align*}
        \widehat{\iota}: (\ell^1(M), ||\cdot||_1) \longrightarrow (\overline{C_M}^{p_M}, p_M) = (\overline{C_{\oM}}^{p_{\oM}}, p_{\oM}).
    \end{align*}
    
Then it is worth recalling that a result of  D. and V. Milman stated in \cite{milman}  says that a sufficient and necessary condition for a Banach space $E$ not to be reflexive is that the positive cone of $\ell^1(\N)_+$ is not embeddable in $E$. We suggest looking at \cite[Theorem 2.9]{milmanproof} for a proof.

\medskip
This twos facts yields: 
    
\begin{cor}
Let $(E,||\cdot||)$ be a Banach lattice and let $M$ be a positive coherent uniformly bounded subset of $E$. Then $(E_{\oM}, p_{\oM})$ is reflexive if and only if $(E_{\oM}, p_{\oM})$ is finite-dimensional.
\end{cor}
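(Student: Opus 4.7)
The ($\Leftarrow$) direction is immediate, since every finite-dimensional normed space is reflexive. The interesting direction is ($\Rightarrow$), and the plan is to combine the isometric positive embedding discussed right before the statement with the Milman criterion, and then exploit the rigidity of reflexive AM-spaces with order unit.

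Assume $(E_{\oM}, p_{\oM})$ is reflexive. The isometry $\widehat{\iota} : (\ell^1(M), \|\cdot\|_1) \to (\overline{C_{\oM}}^{p_{\oM}}, p_{\oM})$ mentioned just above the statement sends $\ell^1(M)_+$ onto $\overline{C_{\oM}}^{p_{\oM}}$, which sits inside the positive cone of $E_{\oM}$. If $M$ were infinite, one could pick a countable subset $\{m_k\}_{k\in\N}\subset M$ and restrict $\widehat{\iota}$ to the corresponding canonical copy of $\ell^1(\N)$ to obtain an isometric positive embedding of $\ell^1(\N)_+$ into $E_{\oM}$. By the Milman theorem recalled in the paragraph preceding the statement, this contradicts the reflexivity of $E_{\oM}$. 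Hence $M = \{m_1, \ldots, m_n\}$ is finite.

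Set $u = m_1 + \cdots + m_n$. By Example \ref{Example principal ideal} we have $(E_M, p_M) = (E_u, \tfrac{1}{p_M(u)}\|\cdot\|_u)$, which is already a Banach space; by Theorem \ref{theorem completion of (E,d) is (E,d)_infty} this forces $(E_M, p_M) = (E_{\oM}, p_{\oM})$. Since $(E_u, \|\cdot\|_u)$ is an AM-space with order unit, Kakutani's representation theorem gives a lattice isometric isomorphism $E_{\oM} \cong C(K)$ for some compact Hausdorff $K$. A reflexive $C(K)$-space must satisfy $K$ finite (because an infinite $K$ contains a convergent sequence with distinct terms, yielding an isometric copy of $c_0$ inside $C(K)$, which is incompatible with reflexivity). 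Therefore $E_{\oM}\cong C(K)$ is finite-dimensional.

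The only delicate point in this plan is step one: checking that the restriction of $\widehat{\iota}$ to a countable subfamily of $M$ really produces a positive embedding of $\ell^1(\N)_+$ that can be fed into Milman's theorem. This follows directly from the fact that $\widehat{\iota}$ is both isometric and order-preserving on the positive cone, so any canonical lattice copy of $\ell^1(\N)_+$ inside $\ell^1(M)_+$ is carried onto a lattice copy of $\ell^1(\N)_+$ inside $\overline{C_{\oM}}^{p_{\oM}}\subset E_{\oM}$.
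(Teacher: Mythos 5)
Your argument follows the paper's proof almost verbatim: the same Milman embedding $\ell^1(\N)_+ \hookrightarrow \ell^1(M)_+ \xrightarrow{\widehat{\iota}} \overline{C_{\oM}}^{p_{\oM}}$ to force $M$ finite, and then the identification $(E_{\oM},p_{\oM}) = (E_u, \tfrac{1}{p_M(u)}\|\cdot\|_u)$ via Example \ref{Example principal ideal}; your appeal to Theorem \ref{theorem completion of (E,d) is (E,d)_infty} to justify $(E_M,p_M)=(E_{\oM},p_{\oM})$ is in fact a cleaner justification than the paper's bare assertion. The only divergence is the endgame: the paper simply cites the fact that a reflexive AM-space with unit is finite-dimensional, whereas you re-derive it through Kakutani and a copy of $c_0$ in $C(K)$. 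There your parenthetical justification is wrong as stated: an infinite compact Hausdorff space need not contain a convergent sequence with distinct terms ($\beta\N$ is the standard counterexample). The conclusion survives, because an infinite Hausdorff compactum always contains an infinite family of pairwise disjoint nonempty open sets, and Urysohn functions supported on them span an isometric copy of $c_0$; replace your sentence with that argument, or just cite the finite-dimensionality of reflexive AM-spaces directly as the paper does.
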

\begin{proof}
Suppose that $(E_{\oM}, p_{\oM})$ is reflexive. Then we claim that the cone $\ell^1(M)_+$ is finite-dimensional. Indeed, suppose it is not the case. Then we can embed $\ell^1(\N)_+$ in $E_{\oM}$ in the following way:
    \begin{align*}
        \ell^1(\N)_+ \hookrightarrow \ell^1(M)_+ \xrightarrow{\widehat{\iota}} \overline{C_{\oM}}^{p_{\oM}}.
    \end{align*}
But this is a contradiction with Milmans's result. Therefore, $\ell^1(M)_+$ is finite-dimensional. This means that there are $m_1,...,m_n \in M$ such that $M = \{m_1,...,m_n\}$. Therefore, $(E_M, p_M) = ( E_u, \frac{1}{p_M(u)}||\cdot||_u)$ where $u = \sum_{j=1}^n m_j$ as seen in Example \ref{Example principal ideal}. Thus, $(E_M, p_M)$ is an AM-space by \cite[Theorem 9.28]{infinitedim} and hence is finite-dimensional by \cite[Theorem 9.38]{infinitedim}. The reverse implication is straightforward.
\end{proof}

Finally, another justification for the notation $\oM$ is given.

\begin{cor}\label{corollary C_oM is p_oM-closed}
Let $(E,||\cdot||)$ be a Banach lattice and let $M$ be a positive coherent uniformly bounded subset of $E$. Then the cone $C_\oM$ is $p_\oM$-closed.
\end{cor}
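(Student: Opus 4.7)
The plan is to identify $C_\oM$ isometrically with the closed positive cone $\ell^1(M)_+$, and then deduce $p_\oM$-closedness of $C_\oM$ from the completeness of $\ell^1(M)_+$.

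Coherence of $M$ furnishes, via Theorem \ref{theorem coherent sets and norms}(e), an isometry $\iota : (c_{00}(M)_+, ||\cdot||_1) \to (C_M, p_M)$. Since $p_M$ and $p_\oM$ agree on $E_M$ by Theorem \ref{theorem completion of (E,d) is (E,d)_infty}, and since $c_{00}(M)_+$ is $||\cdot||_1$-dense in the complete positive cone $\ell^1(M)_+$, the map $\iota$ extends uniquely to an isometry
    \begin{align*}
        \widehat{\iota} : (\ell^1(M)_+, ||\cdot||_1) \longrightarrow \left(\overline{C_M}^{p_\oM}, p_\oM\right),
    \end{align*}
whose target coincides with $\overline{C_\oM}^{p_\oM}$ by Corollary \ref{corollary closure of M}. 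The key claim is that the image of $\widehat{\iota}$ is exactly $C_\oM$, and not merely its $p_\oM$-closure.

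To verify the claim, take $f = \sum_n t_n \delta_{m_n} \in \ell^1(M)_+$ and its truncations $f_N = \sum_{n=1}^N t_n \delta_{m_n} \in c_{00}(M)_+$. On one hand, by construction of the extension, the partial sums $\iota(f_N) = \sum_{n=1}^N t_n m_n$ converge to $\widehat{\iota}(f)$ in $p_\oM$-norm. On the other hand, since $M$ is $||\cdot||$-uniformly bounded and $(t_n)$ is summable, the very same partial sums converge absolutely in $||\cdot||$-norm to $w := \sum_n t_n m_n$, which lies in $\oM = C_\oM$ by definition. Corollary \ref{corollary completion of (E,d) subspace of E}, together with Lemma \ref{lemma different norms same limit}, forces these two limits to agree, so $\widehat{\iota}(f) = w \in C_\oM$. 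Conversely, every element of $C_\oM$ is by definition of the form $\sum_n t_n m_n$ for some summable $(t_n) \subset \R_+$ and $(m_n) \subset M$, and is therefore the image under $\widehat{\iota}$ of $\sum_n t_n \delta_{m_n} \in \ell^1(M)_+$. Hence $\widehat{\iota}(\ell^1(M)_+) = C_\oM$.

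To conclude: $\ell^1(M)_+$ is closed in the Banach space $\ell^1(M)$ and thus complete; being the isometric image of a complete metric space, $C_\oM$ is a complete subset of the Banach lattice $(E_\oM, p_\oM)$, and hence $p_\oM$-closed. I expect the only delicate point to be the identification of the image in the preceding paragraph, where the $p_\oM$-limit produced by the abstract extension of $\iota$ must be matched with the concrete $||\cdot||$-limit witnessing membership in $\oM$; this is precisely what Lemma \ref{lemma different norms same limit} is tailored to deliver.
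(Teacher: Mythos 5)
Your proposal is correct and follows essentially the same route as the paper: both arguments identify $C_\oM$ with $\ell^1(M)_+$ via the extended isometry $\widehat{\iota}$ from point e) of Theorem \ref{theorem coherent sets and norms} and then transfer the closedness (equivalently, completeness) of the positive cone of $\ell^1(M)$ across that identification. If anything, your write-up is more careful than the paper's, since you explicitly verify -- via Corollary \ref{corollary completion of (E,d) subspace of E} and Lemma \ref{lemma different norms same limit} -- that $\widehat{\iota}(\ell^1(M)_+)$ equals $C_\oM$ on the nose, a point the paper's proof uses implicitly when it passes to ``the corresponding sequence'' in $\ell^1(M)_+$ and back.
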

\begin{proof}
Take a sequence $(v_n)_n\subset C_\oM$ which converges to some $v\in E_\oM$ in $p_\oM$-norm. This implies that the corresponding sequence $(v'_n)_n$ in $\ell^1(M)_+$ converges also in $||\cdot||_1$-norm to some $v'\in \ell^1(M)$. Since the positive cone of a Banach lattice is always close (\cite[Theorem 8.43 (1)]{infinitedim}), the vector $v'$ is in $\ell^1(M)_+$. Therefore, we can conclude that $v\in C_\oM$ showing that  $C_\oM$ is $p_\oM$-closed. 
\end{proof}

This last result together with Corollary \ref{corollary closure of M} gives the following. If $M$ is a positive coherent uniformly bounded subset of a Banach lattice, then 
    \begin{align*}
         \overline{C_M}^{p_M} = \overline{C_M}^{p_\oM} = \overline{C_\oM}^{p_\oM} = C_\oM = \oM.
    \end{align*}
Therefore, if $E$ is a Banach lattice and $M$ is a positive coherent uniformly bounded subset of $E$, then $(E_M, p_M)$ is a Banach lattice if and only if $C_M$ is $p_M$-closed if and only if the base $B$ of $C_M$ is $p_M$-closed.

\end{document}